\definecolor{myblue}{rgb}{0,0,0.5}
\definecolor{mygreen}{rgb}{0,0.5,0}
\definecolor{myred}{rgb}{0.5,0,0}
\newcommand{\D}{\mathbb{D}iv \;}
\newtheorem{theorem}{Theorem}[section]
\newtheorem{lemma}{Lemma}[section]
\newtheorem{remark}{Remark}
\def\hlinew#1{%
  \noalign{\ifnum0=`}\fi\hrule \@height #1 \futurelet
   \reserved@a\@xhline}
\def \[{\begin{equation}}
\def \]{\end{equation}}
\begin{document}
\begin{CJK*}{GBK}{song}

\begin{center}

{\large \bf  Indefinite linearized  augmented Lagrangian method for convex programming with linear inequality constraints}\\

\bigskip
\medskip

 {\bf Bingsheng He}\footnote{\parbox[t]{16cm}{
 Department of Mathematics,  Nanjing University, China.
  This author was supported by the NSFC Grant 11871029. Email: hebma@nju.edu.cn}}
 \quad
 {\bf Shengjie Xu}\footnote{\parbox[t]{16cm}{
 Department of Mathematics, Harbin Institute of Technology, Harbin,  China, and Department of Mathematics,   Southern  University of Science and Technology, Shenzhen, China. This author was supported by the NSFC grant 11871264 and the Guangdong Basic and Applied Basic Research Foundation of Chinathrough grant 2018A0303130123.   Email: xsjnsu@163.com
  }}
  \quad
 {\bf Jing Yuan}\footnote{\parbox[t]{16cm}{
  School of Mathematics and Statistics, Xidian University, Xi'an, China. Email: jyuan@xidian.edu.cn
  }}

\bigskip

\today

\end{center}

{\small

\parbox{0.95\hsize}{

\hrule

\medskip

{\bf Abstract.} The augmented Lagrangian method (ALM) is a benchmark for convex programming problems with linear constraints; ALM and its variants for linearly equality-constrained convex minimization models have been well studied in the literature.  However, much less attention has been paid to ALM for efficiently solving linearly inequality-constrained convex minimization models.  In this paper, we exploit an enlightening reformulation  of the newly developed indefinite linearized ALM for the equality-constrained convex optimization problem,  and present a new indefinite linearized ALM scheme for efficiently solving the convex optimization problem with linear inequality constraints. The proposed method enjoys great advantages, especially for large-scale optimization cases, in two folds mainly: first, it largely simplifies the challenging key subproblem of the classic ALM by employing its linearized reformulation, while keeping low complexity in computation; second, we show that only a smaller proximity regularization term is needed for provable convergence, which allows a bigger step-size and hence significantly better performance. Moreover, we show the global convergence of the proposed scheme upon its equivalent compact expression of prediction-correction, along with a worst-case $\mathcal{O}(1/N)$ convergence rate. Numerical results on some application problems demonstrate that a smaller regularization term can lead to a better experimental performance, which further confirms the theoretical results presented in this study.

\medskip

\noindent {\bf Keywords}: augmented Lagrangian method, convex programming,  convergence analysis, inequality constraints,  image segmentation
 \medskip

  \hrule

  }}

\bigskip


\section{Introduction}

A fundamental optimization model is the canonical convex programming problem with linear inequality constraints:
\begin{equation}\label{problem}
\min \big\{ \theta(x) \; | \; Ax\geq b, \; x\in{\cal X}\big\},
\end{equation}
where $\theta$: $\Re^n\to \Re$ is a closed proper convex but not necessarily smooth function,  ${\cal X} \subset \Re^n$ is a nonempty closed convex set, $A\in\Re^{m\times n}$ and $b\in\Re^m$.  Throughout our discussion,  the solution set of \eqref{problem} is assumed to be nonempty, and  $\rho(\cdot)$ is used  to stand for the spectrum of a matrix.

The model \eqref{problem} finds many applications in, e.g., linear and nonlinear programming problems~\cite{bazaraa2006nonlinear,boyd2004convex,luenberger1973introduction},  variational image processing models~\cite{yuan2010study,yuan2010continuous,yuan2014spatially,yuan2018modern} and some machine learning problems~\cite{cortes1995support,cristianini2000introduction,SNW2011}. In particular, the classic linearly equality-constrained convex optimization problem
\begin{equation}\label{problem1}
  \min\big\{ \theta(x) \; | \; Ax= b, \; x\in{\cal X} \big\},
\end{equation}
can be simply taken as its special case, for which the augmented Lagrangian method (ALM) proposed in \cite{Hes1969,Pow1969} was developed as a fundamental tool by imposing the quadratic penalty term $\beta\|Ax-b\|^2/2$ on the linear equality constraints for constructing the augmented Lagrangian function w.r.t. \eqref{problem1}:
\begin{equation}\label{aug-lagr}
\mathcal{L}_{\beta}^{\mathrm{E}}(x,\lambda)= \theta(x)-\lambda^T(Ax-b)+\frac{\beta}{2}\|Ax-b\|^2,
\end{equation}
where $\beta>0$ and $\lambda\in\Re^m$  is the Lagrange multiplier; moreover, with given $\lambda^k$,  the iterative scheme of ALM for \eqref{problem1} is
\begin{subequations}\label{ALM}
\begin{numcases}{\hbox{(Equality ALM)\;}}
      \label{ALM-x} {x}^{k+1}  = \arg\min \bigl\{{\cal L}_{\beta}^{\mathrm{E}}(x,\lambda^k)  \; | \;  x\in\mathcal{X} \bigr\},  \\[0.10cm]
      \label{ALM-y} {{\lambda}}^{k+1} = \lambda^k -\beta(Ax^{k+1}-b) .
     \end{numcases}
\end{subequations}
The method \eqref{ALM} plays a significant role in both theoretical and algorithmic aspects for a large number of convex minimization problems.  We refer the readers to, e.g., \cite{bertsekas1996constrained,Bert2015,birgin2014practical,fortin1983augmented,glowinski1989augmented,ito2008lagrange}, for some monographs about the ALM and its variants. In particular, it was shown in  \cite{rockafellar1976augmented,rockafellar1976} that the ALM \eqref{ALM} is essentially an application of the proximal point algorithm \cite{martinet1970breve} to the dual of \eqref{problem1}.

Likewise, the classic way to solve the studied model \eqref{problem} is to apply the inequality version of  ALM  (see, e.g., \cite{bertsekas1996constrained,Bert2015}), where its iterative scheme is
\begin{subequations}\label{IIALM}
\begin{numcases}{\hbox{(Inequality ALM)\;}}
  \label{IIALM-x}x^{k+1} = \arg\min\big\{ \mathcal{L}_\beta^{\,\mathrm{I\!\!I}}(x,\lambda^k)  \; | \;  x\in {\cal X} \big\}, \\[0.10cm]
\label{IIALM-y}\lambda^{k+1} = [\lambda^k-\beta(Ax^{k+1}-b)]_{+},
  \end{numcases}
\end{subequations}
with the defined augmented Lagrangian function for \eqref{problem}:
\begin{equation}\label{I-AL}
  \mathcal{L}_\beta^{\,\mathrm{I\!\!I}}(x,\lambda):=\theta(x)+\frac{1}{2\beta}\Big\{\|[\lambda-\beta(Ax-b)]_{+}\|^2-\lambda^T\lambda\Big\}.
\end{equation}
Here,  $[\cdot]_{+}$ denotes the standard projection operator on the non-negative orthant in Euclidean space.   Clearly, the essential step for implementing the inequality-constrained  ALM  \eqref{IIALM} is to tackle the subproblem \eqref{IIALM-x}. However, discern that the gradient of the smoothness term over $x$
$$\nabla_x(\|[\lambda^k-\beta(Ax-b)]_{+}\|^2)=-2\beta A^T[\lambda^k-\beta(Ax-b)]_{+}$$
is non-smooth. This is the main obstacle for efficiently implementing the inequality ALM \eqref{IIALM}, and the primary purpose of this paper is to focus on overcoming this difficulty.

Recent development on the linearized  equality ALM (see Section \ref{sec:IDL-ALM}), which adds an additional proximal term w.r.t. an underlying matrix $L$ (see \eqref{L}) to linearize the  augmented Lagrangian function \eqref{aug-lagr} so as to properly reduce the computational difficulty of the pivotal subproblem \eqref{ALM-x}, shows that the positive definiteness of the added proximal term for linearization is not necessarily needed for rendering convergence. It thus results in an efficient indefinite linearized equality ALM for solving \eqref{problem1}, which allows a bigger step-size to accelerate the convergence. Motivated by this discovery, alternatively, the primary purpose of this paper is to develop an indefinite linearized inequality ALM for efficiently solving the convex minimization problem \eqref{problem} with linear inequality constraints.

To the best of our knowledge, this is the first work to study the indefinite linearized ALM framework for efficiently tackling the linearly inequality-constrained convex minimization problem \eqref{problem}. The proposed method enjoys great advantages in many folds: first,  compared with the classic inequality ALM \eqref{IIALM}, the new algorithm provides a much easier and more effective optimization strategy for tackling its core $x$-subproblem, so as to keep the same computational complexity as the linearized equality ALM;
second, the introduced relaxation of the proximal regularization term allows a bigger step-size, and hence better performance; third, the new algorithm covers the most recent indefinite linearized equality ALM as its special case.  Also, we establish the convergence analysis for the proposed method based on its equivalent prediction-correction interpretation, and further numerically illustrate its efficiency by extended experiments on the support vector machine for classification and continuous max-flow models for image segmentation.

The rest of this paper is organized as follows. In Section \ref{section2}, we summarize some preliminaries to motivate the proposed method for the studied model.  In Section \ref{section3}, we propose an indefinite linearized inequality ALM for convex programming problems with linear inequality constraints, which is rooted in a new reformulation of the most recent indefinite linearized equality ALM. We exploit a smaller regularization term for the proposed method, and establish its convergence theory in Section \ref{section4}.  The numerical experiments are further conducted in Section \ref{section5}, which validates our theoretical results numerically. Finally, some extensions and conclusions are discussed in Section \ref{section6} and Section \ref{section7}, respectively.

\section{Preliminaries}\label{section2}
\setcounter{equation}{0}

In this section,  we summarize some preliminary results for further analysis.

\subsection{Variational inequality characterization}
Similar to our previous work such as \cite{he2016convergence,he20121}, the analysis of this work will be conducted in the variational inequality (VI) context. Let us first recall a basic lemma regarding the VI optimality condition, whose proof is elementary and can be found in, e.g., \cite{beck2017first}.
\begin{lemma}  \label{blemma}
Let $\theta:\Re^n\rightarrow\Re$ and $f:\Re^n\rightarrow\Re$ be convex functions, and let ${\cal X} \subset \Re^n$ be a closed convex set.  If  $f$ is differentiable on an open set which contains ${\cal X}$ and the solution set of the optimization problem
$\min\{\theta(x)+f(x) \; | \; x\in \mathcal{X} \}$ is nonempty,  then we have
$$   x^\ast  \in  \arg\min \big\{ \theta(x) +f(x)  \;\big | \;  x\in {\cal X}\big\}$$
if and only if
$${x^\ast} \in {\cal X}, \;\;  \theta(x) -\theta(x^\ast) + (x-x^\ast)^T\nabla f(x^\ast) \ge 0,  \;\;  \forall \; x\in {\cal X}.$$
\end{lemma}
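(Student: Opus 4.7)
The plan is to prove both directions of the equivalence by exploiting the convexity of $\mathcal{X}$, $\theta$, and $f$, together with the fact that $f$ is differentiable near $\mathcal{X}$. Since $\theta$ is nonsmooth, I will not use a gradient condition for $\theta$ directly; instead I will use a first-order variation argument on the combined objective $\theta + f$ that isolates the contribution of $\theta$ in its natural convex-functional form.

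For the ``only if'' direction, I would fix an arbitrary $x \in \mathcal{X}$ and form the perturbation $x_\alpha := x^\ast + \alpha(x - x^\ast)$ for $\alpha \in (0,1]$. Convexity of $\mathcal{X}$ gives $x_\alpha \in \mathcal{X}$, so the optimality of $x^\ast$ yields $\theta(x_\alpha) + f(x_\alpha) \geq \theta(x^\ast) + f(x^\ast)$. Convexity of $\theta$ applied to $x_\alpha$ gives $\theta(x_\alpha) - \theta(x^\ast) \leq \alpha(\theta(x) - \theta(x^\ast))$, and rearranging these two inequalities shows
$$
\alpha\bigl(\theta(x) - \theta(x^\ast)\bigr) \;\geq\; f(x^\ast) - f(x_\alpha).
$$
Dividing by $\alpha$ and letting $\alpha \downarrow 0$, the right-hand side converges to the negative directional derivative of $f$ at $x^\ast$ in the direction $x - x^\ast$, which by differentiability equals $-(x-x^\ast)^T \nabla f(x^\ast)$. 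This yields exactly the required variational inequality.

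For the ``if'' direction, I would combine the assumed variational inequality with the standard first-order underestimate for the convex differentiable function $f$, namely $f(x) - f(x^\ast) \geq (x - x^\ast)^T \nabla f(x^\ast)$. Adding the two inequalities causes the gradient terms to cancel, and one is left with $\theta(x) + f(x) \geq \theta(x^\ast) + f(x^\ast)$ for every $x \in \mathcal{X}$, establishing optimality of $x^\ast$.

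The only non-routine step is the passage to the limit in the ``only if'' direction, where care is needed to justify that the difference quotient $\bigl(f(x_\alpha) - f(x^\ast)\bigr)/\alpha$ converges to the directional derivative. This is immediate from the hypothesis that $f$ is differentiable on an open neighborhood of $\mathcal{X}$, since for $\alpha$ small enough $x_\alpha$ lies in that neighborhood, and differentiability at $x^\ast$ gives the required limit. The converse direction is essentially algebraic and needs no further tools beyond the gradient inequality for convex smooth functions.
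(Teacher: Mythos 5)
Your proof is correct. Note that the paper does not actually prove \cref{blemma}; it simply cites the literature (Beck's \emph{First-Order Methods in Optimization}) for the details. The argument you give --- the line-segment perturbation $x_\alpha = x^\ast + \alpha(x-x^\ast)$ combined with convexity of $\theta$ and the directional derivative of $f$ for the forward direction, and the gradient inequality $f(x)-f(x^\ast)\ge (x-x^\ast)^T\nabla f(x^\ast)$ for the converse --- is exactly the standard proof found there, and both directions as you present them are sound.
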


Then we derive the equivalent optimality condition of \eqref{problem} in the VI context. To this end, by adding the Lagrange multiplier $\lambda\in\Re_{+}^m$  to the inequality constraints,  the Lagrange function of  \eqref{problem} is
\begin{equation}\label{lagran}
  L(x,\lambda)=\theta(x)-\lambda^T(Ax-b).
\end{equation}
A point $(x^\ast,\lambda^\ast)\in\mathcal{X}\times \Re_{+}^m$  is called a saddle point of \eqref{lagran} if it satisfies
$$L_{\lambda\in\Re_{+}^m}(x^{\ast},\lambda)\leq L(x^{\ast},\lambda^{\ast})\leq L_{x\in \mathcal{X}}(x,\lambda^{\ast}).$$
According to Lemma \ref{blemma},  these two inequalities can be alternatively rewritten as
\begin{equation}\label{VI}
  \left\{ \begin{array}{lrl}
     x^\ast\in {\cal X}, & \theta(x) - \theta(x^\ast) + (x-x^\ast)^T(- { A}^T\lambda^\ast) \ge 0, & \forall\; x\in {\cal X};\\[0.2cm]
   \lambda^\ast\in \Re_{+}^m,  & (\lambda-\lambda^\ast)^T({ A}x^\ast-b)\ge 0,  &  \forall \; \lambda\in \Re_{+}^m,
        \end{array} \right.
\end{equation}
or more compactly,
\begin{equation}\label{VIU}
  \hbox{VI}(\Omega,F,\theta):\quad w^{\ast}\in\Omega,  \quad \theta(x)-\theta(x^\ast)+(w-w^\ast)^TF(w^\ast)\geq0, \quad \forall\; w\in\Omega,
\end{equation}
by setting
\begin{equation}\label{CVI}
  \Omega= {\cal X} \times \Re_{+}^m, \quad
  w = \left(\!\!\begin{array}{c}
                     x\\
                  \lambda \end{array}\!\! \right)
  \quad  \hbox{and} \quad
    F(w) =\left(\!\!\begin{array}{c}
     - { A}^T\lambda \\
     {A}x-b \end{array}\!\! \right).
\end{equation}
Since the operator $F(\cdot)$ in \eqref{CVI} is affine with a skew-symmetric matrix, it implies that
\begin{equation}\label{FMON}
  (u-v)^T(F(u)-F(v))\equiv0,\quad \forall \;u,v\in\Omega,
\end{equation}
which means that $F$ is monotone.  Throughout, we denote by $\Omega^\ast$ the solution set of \eqref{VIU}, which is also the saddle point set of the Lagrange function \eqref{lagran} for the studied model \eqref{problem}.

\subsection{Indefinite linearized ALM for the linearly equality-constrained convex programming}\label{sec:IDL-ALM}

It is clear that the essential step for implementing the equality ALM  \eqref{ALM} is to solve the subproblem \eqref{ALM-x}, for which the proximal ALM (see, e.g., \cite{he2020optimal,Zhang2010}) provides a significant strategy by considering the proximal regularized version of ALM:
\begin{subequations}\label{ALM1}
\begin{numcases}{\hbox{(Proximal ALM)}}
\label{ALM1-x} x^{k+1} =\arg\min
  \Big\{ \mathcal{L}_{\beta}^{\mathrm{E}}(x,\lambda^k)+\frac{1}{2}\|x-x^k\|_D^2 \; \big|\;  x\in {\cal X}   \Big\}, \\[0.1cm]
\label{ALM1-y}\lambda^{k+1} = \lambda^k-\beta(Ax^{k+1}-b),
  \end{numcases}
\end{subequations}
with $\|x\|_D^2:=x^TDx$ for any symmetric matrix $D$. As discussed in \cite{he2020optimal}, the proximal matrix $D\in\Re^{n\times n}$ in \eqref{ALM1} is required to be symmetric and positive-definite (i.e., $\|\cdot\|_D$ is a norm) for provable convergence. To implement the subproblem \eqref{ALM1-x}, by ignoring some constant terms, we have
\begin{eqnarray}\label{Prox-A-S}
 x^{k+1} &=&\arg\min_{x\in {\cal X}} \Big\{ \theta(x)-\lambda^k(Ax-b)+\frac{\beta}{2}\|Ax-b\|^2+\frac{1}{2}\|x-x^k\|_D^2\Big\}    \nonumber\\
  &=&  \arg\min_{x\in {\cal X}} \Big\{ \theta(x)-x^TA^T\lambda^k+\frac{\beta}{2}\|A(x-x^k)+Ax^k-b\|^2+\frac{1}{2}\|x-x^k\|_D^2\Big\}    \\
  &=&  \arg\min_{x\in {\cal X}} \Big\{ \theta(x)-x^TA^T[\lambda^k-\beta(Ax^k-b)]+\frac{\beta}{2}\|A(x-x^k)\|^2+\frac{1}{2}\|x-x^k\|_D^2\Big\}. \nonumber
\end{eqnarray}

Clearly, the solution set of \eqref{Prox-A-S} is determined by the objective function $\theta$, the matrix $A$, the domain $\mathcal{X}$ and the proximal matrix $D$. Its solution in general, as discussed in e.g., \cite{eckstein2017approximate,he2020optimal}, can only be approximated by certain iterative approach. On the other hand, if the positive definite matrix $D$ in \eqref{ALM1} is taken as
\begin{equation}\label{L}
  D:=rI_n-\beta A^TA \quad  \hbox{with} \quad  r>\beta\rho(A^TA),
\end{equation}
the subproblem \eqref{ALM1-x} or \eqref{Prox-A-S} can be essentially reduced to
\begin{eqnarray}\label{L-A-S}
x^{k+1}&=&\arg\min \Big\{  \theta(x)-x^TA^T[\lambda^k-\beta(Ax^k-b)]+\frac{r}{2}\|x-x^k\|^2  \;\big | \;  x\in {\cal X} \Big \},   \nonumber   \\
&=&\arg\min \Big\{ \theta(x)+\frac{r}{2}\|x-x^k-\frac{1}{r}A^T[\lambda^k-\beta(Ax^k-b)]\|^2  \;\big | \;  x\in {\cal X} \Big \},
\end{eqnarray}
i.e., the proximity operator of $\theta$ when $\mathcal{X}=\Re^n$, and it thus leads to the following so-called linearized ALM:
\begin{subequations}\label{ALM11}
\begin{numcases}{\hbox{(Linearized ALM)}}
\label{ALM11-x} x^{k+1} =\arg\min\Big\{ \mathcal{L}_{\beta}^{\mathrm{E}}(x,\lambda^k)+\frac{1}{2}\|x-x^k\|_D^2 \; \big|\;  x\in {\cal X}   \Big\}, \\[0.1cm]
\label{ALM11-y}\lambda^{k+1} = \lambda^k-\beta(Ax^{k+1}-b).
  \end{numcases}
\end{subequations}
This linearization is particularly significant for the case when the proximity operator of $\theta$, which is defined via
\begin{equation}\label{ProxOP}
\hbox{Prox}_{\theta,\beta}(y)=\arg\min\Big\{\theta(x)+\frac{1}{2\beta}\|x-y\|^2\;\big|\;x\in\Re^n\Big\},
\end{equation}
has a closed-form solution. For instance, the solution of \eqref{ProxOP} can be given by the soft thresholding function  (see, e.g., Example 6.8 in \cite{beck2017first})  when $\theta=\|\cdot\|_1$. We also see the review paper \cite{parikh2014proximal} for more concrete examples.

Meanwhile, it is easy to discern that a smaller $r$ in \eqref{L-A-S} would result in a bigger step-size at each iteration, which can accelerate the convergence moderately. It thus motivates the following linearized ALM with the optimal step size proposed in \cite{he2020optimal}:
\begin{subequations}\label{IALM}
\begin{numcases}{\hbox{(IDL-ALM)}}
 \label{ILAM-X} x^{k+1} =\arg\min
  \bigl\{ \mathcal{L}_{\beta}^{\mathrm{E}}(x,\lambda^k)+\frac{1}{2}\|x-x^k\|_{D_0}^2  \;\big|\;  x\in {\cal X}   \bigr\}, \label{IALM1-x}\\[0.1cm]
\label{ILAM-Y}\lambda^{k+1} = \lambda^k-\beta(Ax^{k+1}-b),\label{IALM1-y}
  \end{numcases}
\end{subequations}
in which
\begin{equation}\label{IALM1-cond}
D_0:=\tau rI-\beta A^TA \;\; \hbox{with} \;\; r>\beta\rho(A^TA)\;\; \hbox{and} \;\; \tau\in(0.75,1).
\end{equation}
Moreover, with the positive definite matrix $D$ defined in \eqref{L}, $D_0$ can be denoted by
\begin{equation}\label{L0}
D_0=\tau D-(1-\tau)\beta A^TA.
\end{equation}
On the one hand, compared with $D$ defined in \eqref{L}, it is easy to see that the new proximal matrix $D_0$ can be indefinite when  $\tau\in(0.75,1)$, which means that $\|\cdot\|_{D_0}$ is not necessarily a norm. On the other hand, the subproblem \eqref{ILAM-X} can be linearized in the form of \eqref{L-A-S}. With this regard, we call  \eqref{IALM} the \textbf{indefinite linearized ALM} (abbreviated as IDL-ALM for short).

\subsection{New insights to the IDL-ALM (\ref{IALM})} \label{sec:IALM}
Now we give an enlightening reformulation of the IDL-ALM \eqref{IALM}, which brings some new theoretical insights to the proposed numerical scheme for the studied model \eqref{problem}. More concretely,
setting $\tilde{\lambda}^k=\lambda^k-\beta(Ax^k-b)$, the subproblem \eqref{ILAM-X} equals to
\begin{eqnarray*}
x^{k+1} &=&\arg\min_{x\in {\cal X}} \Big\{ \theta(x)-x^TA^T[\lambda^k-\beta(Ax^k-b)]+\frac{\beta}{2}\|A(x-x^k)\|^2+\frac{1}{2}\|x-x^k\|_{D_0}^2\Big\}\\
&=&\arg\min_{x\in {\cal X}} \Big\{ \theta(x)-(\tilde{\lambda}^k)^TAx+\frac{\beta}{2}\|A(x-x^k)\|^2+\frac{1}{2}\|x-x^k\|_{(\tau rI-\beta A^TA)}^2\Big\}\\
&=&\arg\min_{x\in {\cal X}}\Big\{ \theta(x)-(\tilde{\lambda}^k)^TAx+\frac{\tau r}{2}\|x-x^k\|^2\Big\},
\end{eqnarray*}
and the $\lambda$-subproblem \eqref{ILAM-Y} can be further rewritten as
$$\lambda^{k+1}=\lambda^k-\beta(Ax^{k+1}-b)=\lambda^k-\beta(Ax^k-b)+\beta A(x^k-x^{k+1})=\tilde{\lambda}^k+\beta A(x^k-x^{k+1}).$$
Consequently, we can rewrite the newly developed IDL-ALM \eqref{IALM} as
\begin{subequations}\label{I-L-ALM}
\begin{numcases}{}
\label{I-L-ALM-u} \tilde{\lambda}^{k} =  \lambda^k - \beta(Ax^k-b),\\[0.15cm]
  x^{k+1} =\arg\min\bigl\{ \theta(x)-(\tilde{\lambda}^k)^TAx+\frac{\tau r}{2}\|x-x^k\|^2 \;\big|\;  x\in {\cal X}   \bigr\}, \\[0.15cm]
\lambda^{k+1} =  \tilde{\lambda}^k + \beta A(x^k-x^{k+1}).
  \end{numcases}
\end{subequations}
Indeed, this new representation \eqref{I-L-ALM} of the IDL-ALM \eqref{IALM} motivates the main result of this work immediately.

\section{Indefinite linearized ALM for the linearly inequality-constrained convex programming}\label{section3}
\setcounter{equation}{0}

The primary purpose of the paper is to efficiently solve the linearly inequality-constrained convex optimization problem \eqref{problem}.
The essential step of the classic ALM approach for \eqref{problem} is to tackle the subproblem \eqref{IIALM-x}, which is, however, often complicated in practice and has no efficient solution in general. With this respect, we now introduce a new indefinite linearized ALM approach for the studied model \eqref{problem}, which is motivated by the IDL-ALM reformulation \eqref{I-L-ALM} and can properly avoid solving such a difficult subproblem \eqref{IIALM-x}.

More specifically, with given $(x^k,\lambda^k)$, our new algorithm generates $(x^{k+1},\lambda^{k+1})$ via
\begin{subequations}\label{I-IDL-ALM}
\begin{numcases}{}
\label{I-IDL-ALM-u} \tilde{\lambda}^{k} =  [\lambda^k - \beta(Ax^k-b)]_{+},\\[0.15cm]
  x^{k+1} = \arg\min\bigl\{ \theta(x)-(\tilde{\lambda}^k)^TAx+\frac{\tau r}{2}\|x-x^k\|^2 \;\big|\;  x\in {\cal X}   \bigr\}, \\[0.15cm]
\lambda^{k+1} =  \tilde{\lambda}^k + \beta A(x^k-x^{k+1}),
  \end{numcases}
where
\begin{equation}
r>\beta\rho(A^TA) \quad \hbox{and} \quad \tau\in(0.75,1).
\end{equation}
\end{subequations}
Compared with the classic inequality ALM \eqref{IIALM}, the proposed method \eqref{I-IDL-ALM} only needs to tackle a much easier proximal estimation at each iteration, which may have a direct close-form solver for many favorable application problems.  Also, recall the equivalent representation \eqref{I-L-ALM} for the IDL-ALM \eqref{IALM}. It is clear that the minor difference between \eqref{I-IDL-ALM} and \eqref{I-L-ALM} is the corresponding $\tilde{\lambda}$ step, and that the original IDL-ALM  \eqref{IALM} can be regarded as a special case of \eqref{I-IDL-ALM} when the linearly equality-constrained model \eqref{problem1} is considered (i.e., the multiplier $\lambda$ is free).
With this regard, we also name \eqref{I-IDL-ALM} the \textbf{inequality version of indefinite linearized ALM} (abbreviated as I-IDL-ALM).

\subsection{Prediction-correction interpretation of the I-IDL-ALM (\ref{I-IDL-ALM})}\label{subsec2.3}
To simplify the analysis for the new algorithm \eqref{I-IDL-ALM}, we first rewrite \eqref{I-IDL-ALM} as a prediction-correction method as follows.
 \begin{equation}\label{I-L-ALM-P}
\hbox{(Prediction Step)} \left\{ \begin{array}{rcl}
 \tilde{\lambda}^{k}& = & [\lambda^k - \beta(Ax^k-b)]_{+},\\[0.2cm]
  \tilde{x}^{k} &=& \arg\min\bigl\{ \theta(x)-(\tilde{\lambda}^k)^TAx+\frac{\tau r}{2}\|x-x^k\|^2  \; \big| \;  x\in {\cal X}   \bigr\},
  \end{array} \right.
 \end{equation}
\begin{equation}\label{corrector}
\hbox{(Correction Step)}\quad \left(\!\!
  \begin{array}{c}
    x^{k+1} \\
    \lambda^{k+1} \\
  \end{array}\!\!
\right)=\left(\!\!
  \begin{array}{c}
    x^{k} \\
    \lambda^{k} \\
  \end{array}\!\!
\right)-\left(\!\!
          \begin{array}{cc}
            I_n & 0 \\
            -\beta A & I_m \\
          \end{array}\!\!
        \right)\left(\!\!
  \begin{array}{c}
    x^k-\tilde{x}^{k} \\
    \lambda^k-\tilde{\lambda}^{k} \\
  \end{array}\!\!
\right). \qquad \qquad \quad
\end{equation}
We would emphasize that such a two-stage explanation \eqref{I-L-ALM-P}-\eqref{corrector} only serves for further theoretical analysis and there is no need to use it when implementing the new method \eqref{I-IDL-ALM}.

Then we derive the associated VI of the prediction step \eqref{I-L-ALM-P}. To this end, it follows from Lemma \ref{blemma} that the output of the $x$-subproblem in \eqref{I-L-ALM-P} satisfies
\begin{equation}\label{I-LL-ALMxv}
\tilde{x}^k\in\mathcal{X}, \;\; \theta(x)-\theta(\tilde{x}^k)+(x-\tilde{x}^k)^T\{-A^T\tilde{\lambda}^{k}+\tau r(\tilde{x}^k-x^k)\}\geq0, \;\; \forall \; x\in\mathcal{X}.
\end{equation}
For the $\lambda$-subproblem in \eqref{I-L-ALM-P}, according to the basic property of the projection operator onto a convex set  (see, e.g., Appendix B of \cite{luenberger1973introduction}), we have
$$ \tilde{\lambda}^{k}\in\Re_{+}^m,\;\; (\lambda-\tilde{\lambda}^{k})^T\{\lambda^k-\beta(Ax^k-b)-\tilde{\lambda}^k\}\leq0, \;\; \forall \; \lambda\in\Re_{+}^m,$$
which can be further rewritten as
\begin{equation}\label{I-LL-ALMY}
  \tilde{\lambda}^{k}\in\Re_{+}^m,\;\; (\lambda-\tilde{\lambda}^{k})^T\big\{(A\tilde{x}^{k}-b)-A(\tilde{x}^{k}-x^k)+\frac{1}{\beta}(\tilde{\lambda}^{k}-\lambda^k)\big\}\geq0, \;\;
  \forall \; \lambda\in\Re_{+}^m.
\end{equation}
Adding \eqref{I-LL-ALMxv} and \eqref{I-LL-ALMY}, and using the notations in \eqref{CVI},  the VI of \eqref{I-L-ALM-P} can be compactly written as

\begin{center}
 \fbox{\begin{minipage}{16.5cm}
 \medskip
(Prediction step)
\begin{equation}\label{vi1}
  \tilde{w}^k\in\Omega, \;\; \theta(x)-\theta(\tilde{x}^k)+(w-\tilde{w}^k)^TF(\tilde{w}^k)\geq(w-\tilde{w}^k)^TQ(w^k-\tilde{w}^k), \;\; \forall \; w\in\Omega,
\end{equation}
where
\begin{equation}\label{Q}
  Q=\left(\!\!
    \begin{array}{cc}
      \tau rI_n & 0 \\
      -A & \frac{1}{\beta}I_m \\
    \end{array}\!\!
  \right).
\end{equation}
\vspace{-3pt} 
\end{minipage}}
\end{center}
\medskip
Moreover, the corrector \eqref{corrector} can be recursively given by
\medskip
\begin{center}
 \fbox{\begin{minipage}{16.5cm}
 \medskip
(Correction step)
\begin{equation}\label{vi2}
  w^{k+1}=w^k-M(w^k-\tilde{w}^k),
\end{equation}
where
\begin{equation}\label{M}
  M=\left(\!\!
      \begin{array}{cc}
        I_n & 0 \\
        -\beta A & I_m \\
      \end{array}\!\!
    \right).
\end{equation}
\vspace{-3pt} 
\end{minipage}}
\end{center}
\medskip

\subsection{Some basic matrices}
To simplify the notations for further analysis, let us also define some basic matrices. More concretely, with the matrices $Q$ and $M$ defined in \eqref{Q}  and \eqref{M}, respectively, we first define $H:=QM^{-1}$. Then it holds that
\begin{equation}\label{H}H=QM^{-1}=\left(\!\!
    \begin{array}{cc}
      \tau rI_n & 0 \\
      -A & \frac{1}{\beta}I_m \\
    \end{array}\!\!
  \right)\left(\!\!
      \begin{array}{cc}
        I_n & 0 \\
        \beta A & I_m \\
      \end{array}\!\!
    \right)=\left(\!\!
               \begin{array}{cc}
                 \tau rI_n & 0 \\
                 0 & \frac{1}{\beta}I_m \\
               \end{array}\!\!
             \right).
\end{equation}
It is clear that the matrix $H$ is symmetric and positive-definite for any $\tau>0$, $r>0$ and $\beta>0$. Since $H=QM^{-1}$, we have
$$M^THM=M^TQ=\left(\!\!
      \begin{array}{cc}
        I_n & -\beta A^T \\
        0 & I_m \\
      \end{array}\!\!
    \right)\left(\!\!
    \begin{array}{cc}
      \tau rI_n & 0 \\
      -A & \frac{1}{\beta}I_m \\
    \end{array}\!\!
  \right)=\left(\!\!
            \begin{array}{cc}
              \tau rI_n+\beta A^TA & -A^T \\
              -A & \frac{1}{\beta}I_m \\
            \end{array}\!\!
          \right).
  $$
Moreover, we define $G:=Q^T+Q-M^THM$, then we get
\begin{eqnarray}\label{G}
 G & = &(Q^T+Q)-M^THM=\left(\!\!
    \begin{array}{cc}
      2\tau rI_n & -A^T \\
      -A & \frac{2}{\beta}I_m \\
    \end{array}\!\!
  \right)-\left(\!\!
            \begin{array}{cc}
              \tau rI_n+\beta A^TA & -A^T \\
              -A & \frac{1}{\beta}I_m \\
            \end{array}\!\!
          \right) \nonumber \\[0.2cm]
          &=&\left(\!\!
            \begin{array}{cc}
              \tau rI_n-\beta A^TA & 0 \\
              0 & \frac{1}{\beta}I_m \\
            \end{array}\!\!
          \right)\overset{\eqref{IALM1-cond}}{=}\left(\!\!
            \begin{array}{cc}
              D_0  & 0\\
              0 & \frac{1}{\beta}I_m \\
            \end{array}\!\!
          \right).
\end{eqnarray}
Note that $D_0$ defined in \eqref{IALM1-cond} is a symmetric but not necessarily positive-definite matrix.  $\|\cdot\|_G$ (where $\|u\|_G^2:=u^TGu$) is not necessarily a norm.

\section{Convergence analysis}\label{section4}
\setcounter{equation}{0}

In this section, we explore the in-depth choice of the regularization parameter $\tau$ for the proximal subproblem of $x^{k+1}$ in the proposed I-IDL-ALM \eqref{I-IDL-ALM}, which is the main numerical load for \eqref{I-IDL-ALM}, so as to reach its convergence efficiently with less iterations. We also establish the convergence theory for the proposed method  \eqref{I-IDL-ALM}.

\subsection{In-depth choice on $\tau$}\label{sec:4.1}
It is obvious that a smaller $\tau$ for the proximal optimization of $x^{k+1}$ in the proposed method \eqref{I-IDL-ALM} allows a bigger step size for the proximity approximation of $x^{k+1}$, which potentially saves iterations to render a faster convergence. Indeed, we show that the value of $\tau$ can be even less than 1, actually within $(0.75, 1)$, to achieve convergence, which exactly means that a positive indefinite matrix $D_0$, denoted by \eqref{IALM1-cond}, is taken for proximity approximation. To establish the global convergence theory of the proposed method  \eqref{I-IDL-ALM},  let us first prove an essential inequality.
\begin{lemma}\label{KLemma}
Let $\{w^k\}$ and $\{\tilde{w}^k\}$ be the sequences generated by the prediction-correction formulation \eqref{I-L-ALM-P}-\eqref{corrector} of the proposed  I-IDL-ALM \eqref{I-IDL-ALM} for the studied model \eqref{problem}. Then we have
\begin{eqnarray}\label{l1}
\lefteqn{\theta(x)-\theta(\tilde{x}^k)+(w-\tilde{w}^k)^TF({w})} \nonumber \\
  &\quad \geq & \frac{1}{2}\big\{\|w-w^{k+1}\|_H^2-\|w-w^{k}\|_H^2\big\}+\frac{1}{2}\|w^k-\tilde{w}^k\|_G^2, \;\;  \forall  \; w\in\Omega,
\end{eqnarray}
where $G$ is the matrix defined as in \eqref{G}.
\end{lemma}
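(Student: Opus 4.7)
The plan is to start from the prediction-step variational inequality (2.15) and manipulate its right-hand side into the target form using the algebraic identities $Q = HM$ and $G = Q^T + Q - M^THM$ together with the corrector formula (2.17).

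First I would invoke \cref{blemma} (the VI from the prediction step), which gives
$$\theta(x)-\theta(\tilde{x}^k)+(w-\tilde{w}^k)^TF(\tilde{w}^k)\geq (w-\tilde{w}^k)^TQ(w^k-\tilde{w}^k), \quad \forall w\in\Omega.$$
Because $F$ is affine with a skew-symmetric matrix, property \eqref{FMON} yields $(w-\tilde{w}^k)^T[F(w)-F(\tilde{w}^k)] = 0$, so $F(\tilde{w}^k)$ on the left can be swapped for $F(w)$ without changing the inequality. It then remains only to rewrite the right-hand side.

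Next I would use the definition $H=QM^{-1}$ and the correction rule \eqref{vi2}, $w^k-w^{k+1}=M(w^k-\tilde{w}^k)$, to get
$$Q(w^k-\tilde{w}^k)=HM(w^k-\tilde{w}^k)=H(w^k-w^{k+1}).$$
Splitting $w-\tilde{w}^k=(w-w^k)+(w^k-\tilde{w}^k)$, the right-hand side becomes
$$(w-w^k)^T H(w^k-w^{k+1})+(w^k-\tilde{w}^k)^T H(w^k-w^{k+1}).$$
For the first summand I would apply the standard polarization identity
$$2(w-w^k)^T H(w^k-w^{k+1})=\|w-w^{k+1}\|_H^2-\|w-w^k\|_H^2-\|w^k-w^{k+1}\|_H^2,$$
and for the second I would use $w^k-w^{k+1}=M(w^k-\tilde{w}^k)$ again together with $\|w^k-w^{k+1}\|_H^2=\|w^k-\tilde{w}^k\|_{M^THM}^2$ to turn it into a pure quadratic in $(w^k-\tilde{w}^k)$.

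Finally, combining the two pieces produces the desired $H$-norm difference plus the quadratic term
$$(w^k-\tilde{w}^k)^T\bigl[Q-\tfrac{1}{2}M^THM\bigr](w^k-\tilde{w}^k)=\tfrac{1}{2}(w^k-\tilde{w}^k)^T(Q+Q^T-M^THM)(w^k-\tilde{w}^k)=\tfrac{1}{2}\|w^k-\tilde{w}^k\|_G^2,$$
which is exactly \eqref{l1}. There is no serious obstacle here; the only mild subtlety is keeping track of the symmetric vs.\ non-symmetric parts of $Q$ when replacing $(w^k-\tilde{w}^k)^TQ(w^k-\tilde{w}^k)$ by $\tfrac{1}{2}\|\cdot\|_{Q+Q^T}^2$, and checking that the monotonicity step is lossless (which it is, thanks to \eqref{FMON}). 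The fact that $G$ is indefinite here does not affect the derivation itself; it will only matter when \eqref{l1} is used downstream to prove contraction of $\|w-w^{k}\|_H^2$ in \cref{section4.2}.
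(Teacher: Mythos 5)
Your proof is correct and follows essentially the same route as the paper: both rest on the monotonicity swap $F(\tilde{w}^k)\to F(w)$ via \eqref{FMON}, the substitution $Q(w^k-\tilde{w}^k)=H(w^k-w^{k+1})$ from \eqref{vi2} and $H=QM^{-1}$, and the identification of the residual quadratic with $\tfrac{1}{2}\|w^k-\tilde{w}^k\|_G^2$ through $G=Q^T+Q-M^THM$. The only difference is bookkeeping: you split $w-\tilde{w}^k=(w-w^k)+(w^k-\tilde{w}^k)$ and use the three-point polarization identity, whereas the paper applies the four-point identity $(a-b)^TH(c-d)$ directly and then shows $\|w^k-\tilde{w}^k\|_H^2-\|w^{k+1}-\tilde{w}^k\|_H^2=\|w^k-\tilde{w}^k\|_G^2$; the two computations are algebraically equivalent.
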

\begin{proof}
First, it follows from \eqref{vi2} and \eqref{H} that $H(w^k-w^{k+1}) = Q(w^k-\tilde{w}^k)$. Then we have
\begin{equation}\label{Th4-1}
 (w-\tilde{w}^k)^TQ(w^k-\tilde{w}^k)= (w-\tilde{w}^k)^TH(w^k-w^{k+1}),
\end{equation}
and thus we obtain
\begin{eqnarray}\label{b1}
 & & \theta(x)-\theta(\tilde{x}^k)+(w-\tilde{w}^k)^TF({w}) \overset{\eqref{FMON}}{=}  \theta(x)-\theta(\tilde{x}^k)+(w-\tilde{w}^k)^TF(\tilde{w}^k) \nonumber \\
 & & \quad \overset{\eqref{vi1}}{\geq}  (w-\tilde{w}^k)^TQ(w^k-\tilde{w}^k) \overset{\eqref{Th4-1}}{=}  (w-\tilde{w}^k)^TH(w^k-{w}^{k+1}).
\end{eqnarray}
Applying the identity
$$(a-b)^TH(c-d)=\frac{1}{2}\big\{\|a-d\|_H^2-\|a-c\|_H^2\big\}+\frac{1}{2}\big\{\|c-b\|_H^2-\|d-b\|_H^2\big\}$$
to the right hand of \eqref{b1} with $a=w,\;b=\tilde{w}^k,\;c=w^k\; \hbox{and} \;d=w^{k+1}$, we obtain
\begin{eqnarray}\label{b2}
\lefteqn{(w-\tilde{w}^k)^TH(w^k-{w}^{k+1})} \nonumber\\
 &\quad =& \frac{1}{2}\{\|w-w^{k+1}\|_H^2-\|w-w^k\|_H^2\} +\frac{1}{2}\{\|w^k-\tilde{w}^k\|_H^2-\|w^{k+1}-\tilde{w}^k\|_H^2\}.
\end{eqnarray}
For the second term of \eqref{b2}, we have
\begin{eqnarray} \label{b3}
 \lefteqn{\|w^k-\tilde{w}^k\|_H^2-\|w^{k+1}-\tilde{w}^k\|_H^2} \nonumber \\
   &\overset{(\ref{vi2})}{=}& \|w^k-\tilde{w}^k\|_H^2-\|(w^{k}-\tilde{w}^k)-M(w^{k}-\tilde{w}^k)\|_H^2  \nonumber \\
   &=& 2(w^{k}-\tilde{w}^k)^THM(w^{k}-\tilde{w}^k)-(w^{k}-\tilde{w}^k)^TM^THM(w^{k}-\tilde{w}^k) \nonumber \\
   &=& (w^{k}-\tilde{w}^k)^T(Q^T+Q-M^THM)(w^{k}-\tilde{w}^k)  \nonumber \\
   &\overset{(\ref{G})}{=}& \|w^{k}-\tilde{w}^k\|_G^2.
\end{eqnarray}
Substituting \eqref{b2} and \eqref{b3} into \eqref{b1}, the assertion of the lemma follows immediately.
\end{proof}

If the matrix $G$ defined in (\ref{G}) is positive definite (it holds naturally when $\tau\geq1$), then Lemma \ref{KLemma} can essentially imply the global convergence and a worst-case convergence rate measured by iteration complexity for the proposed method \eqref{I-IDL-ALM}. We refer the readers to, e.g., \cite{he2018,he2014strictly,he20121,he2018class}, for the analogous analytical techniques. However, we are interested in whether a smaller $\tau$ is feasible, which means that the matrix $G$ is not necessarily positive-definite. Hence, the assertion of Lemma \ref{KLemma} can not be utilized directly, which makes the convergence analysis for the new algorithm \eqref{I-IDL-ALM} more challenging.

Motivated the analysis techniques in, e.g., \cite{he2016convergence,he2020optimal,he2020optimally},  our main goal in the following is to show the term  $\|w^{k}-\tilde{w}^k\|_G^2$ in \eqref{l1} satisfies
\begin{equation}\label{mianidea}
  \|w^{k}-\tilde{w}^k\|_G^2\geq \phi(w^k,w^{k+1})-\phi(w^{k-1},w^k)+\varphi(w^k,w^{k+1}),
\end{equation}
where $\phi(\cdot,\cdot)$ and $\varphi(\cdot,\cdot)$ are both  non-negative functions, and $\varphi(\cdot,\cdot)$ is used to measure how much $w^{k+1}$ fails to be a solution point of \eqref{VIU}. Once the above inequality is established,  by substituting \eqref{mianidea} into  \eqref{l1}, we can immediately obtain
\begin{eqnarray}\label{l-key}
  \lefteqn{\theta(x)-\theta(\tilde{x}^k)+(w-\tilde{w}^k)^TF({w})} \nonumber \\
   &\quad \geq  & \frac{1}{2}\big\{\|w-w^{k+1}\|_H^2+\phi(w^k,w^{k+1})\big\}-\frac{1}{2}\big\{\|w-w^{k}\|_H^2+\phi(w^{k-1},w^k)\big\} \nonumber \\
   &         &  +\, \varphi(w^k,w^{k+1}), \quad   \forall  \; w\in\Omega.
\end{eqnarray}
As will be shown in Section \ref{section4.2}, the inequality (\ref{l-key}) is an essential property for establishing the convergence analysis of the proposed method \eqref{I-IDL-ALM}.

To show the desired inequality \eqref{mianidea}, we first write $\|w^{k}-\tilde{w}^k\|_G^2$ as the sum of several terms.
\begin{lemma}
Let $\{w^k\}$ and $\{\tilde{w}^k\}$ be the sequences generated by the prediction-correction formulation \eqref{I-L-ALM-P}-\eqref{corrector} of the  I-IDL-ALM \eqref{I-IDL-ALM} for the studied model \eqref{problem}. Then we have
\begin{eqnarray}\label{b7}
  \|w^k-\tilde{w}^k\|_G^2&=&\tau\|x^k-{x}^{k+1}\|_D^2+\tau\beta\|A(x^k-{x}^{k+1})\|^2+\frac{1}{\beta}\|\lambda^k-\lambda^{k+1}\|^2  \nonumber\\
 && +\,2(\lambda^k-\lambda^{k+1})^TA(x^k-{x}^{k+1}).
\end{eqnarray}
\end{lemma}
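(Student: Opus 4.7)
The plan is to exploit the block-diagonal structure of $G$ given in \eqref{G}, together with the fact that $\tilde{x}^k=x^{k+1}$ (so the $x$-block of $w^k-\tilde w^k$ is $x^k-x^{k+1}$) and the explicit relation between $\tilde\lambda^k$ and $\lambda^{k+1}$ provided by the third line of \eqref{I-IDL-ALM} (equivalently, the corrector \eqref{corrector}). So first I would write
\[
\|w^k-\tilde{w}^k\|_G^2=(x^k-x^{k+1})^TD_0(x^k-x^{k+1})+\tfrac{1}{\beta}\|\lambda^k-\tilde{\lambda}^k\|^2,
\]
and then handle the two terms separately.

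For the $\lambda$-term, I would use $\lambda^{k+1}=\tilde{\lambda}^k+\beta A(x^k-x^{k+1})$ to get the identity
\[
\lambda^k-\tilde{\lambda}^k=(\lambda^k-\lambda^{k+1})+\beta A(x^k-x^{k+1}),
\]
expand the square, and divide by $\beta$ to produce $\tfrac{1}{\beta}\|\lambda^k-\lambda^{k+1}\|^2$, the cross term $2(\lambda^k-\lambda^{k+1})^TA(x^k-x^{k+1})$, and a leftover $\beta\|A(x^k-x^{k+1})\|^2$. For the $x$-term, I would invoke the decomposition \eqref{L0}, namely $D_0=\tau D-(1-\tau)\beta A^TA$, to rewrite
\[
(x^k-x^{k+1})^TD_0(x^k-x^{k+1})=\tau\|x^k-x^{k+1}\|_D^2-(1-\tau)\beta\|A(x^k-x^{k+1})\|^2.
\]

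Finally, adding the two contributions, the two $\|A(x^k-x^{k+1})\|^2$ terms combine as $\beta-(1-\tau)\beta=\tau\beta$, which yields exactly \eqref{b7}. The argument is pure bookkeeping; the only point requiring a bit of care is making sure the $\tilde\lambda^k$-to-$\lambda^{k+1}$ substitution uses the update rule from \eqref{I-IDL-ALM} (not the equality-case rule) and that the sign of the $(1-\tau)\beta\|A\cdot\|^2$ contribution from $D_0$ is tracked correctly so that it cancels cleanly against the $\beta\|A\cdot\|^2$ coming from the cross-term expansion. There is no genuine analytical obstacle here; the lemma is a deliberate algebraic rearrangement whose purpose is to split $\|w^k-\tilde w^k\|_G^2$ into pieces — a definite quadratic $\tau\|x^k-x^{k+1}\|_D^2$ plus a square-completable tail in $\lambda^k-\lambda^{k+1}$ and $A(x^k-x^{k+1})$ — that can later be massaged into the telescoping bound \eqref{mianidea} needed for convergence.
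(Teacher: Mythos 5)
Your proposal is correct and follows essentially the same route as the paper's own proof: both expand $\|w^k-\tilde w^k\|_G^2$ via the block-diagonal form of $G$, substitute $\tilde x^k=x^{k+1}$ and $\lambda^k-\tilde\lambda^k=(\lambda^k-\lambda^{k+1})+\beta A(x^k-x^{k+1})$ from the corrector, apply $D_0=\tau D-(1-\tau)\beta A^TA$, and let the $\beta\|A(x^k-x^{k+1})\|^2$ terms combine to $\tau\beta\|A(x^k-x^{k+1})\|^2$. No gaps; the bookkeeping is exactly as in the paper.
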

\begin{proof}
To begin with, it follows from \eqref{G} and  \eqref{L0}  that
\begin{eqnarray}\label{Lema-1-1}
\|w^k-\tilde{w}^k\|_G^2&\overset{\eqref{G}}{=}&\|x^k-\tilde{x}^k\|_{D_0}^2+\frac{1}{\beta}\|\lambda^k-\tilde{\lambda}^k\|^2 \nonumber\\
  &\overset{\eqref{L0}}{=}&\tau\|x^k-\tilde{x}^k\|_D^2-(1-\tau)\beta\|A(x^k-\tilde{x}^k)\|^2+\frac{1}{\beta}\|\lambda^k-\tilde{\lambda}^k\|^2.
\end{eqnarray}
According to \eqref{corrector}, we have
\begin{equation}\label{Lema-1}
  \tilde{x}^k=x^{k+1} \quad \hbox{and} \quad \lambda^k-\tilde{\lambda}^k=(\lambda^k-\lambda^{k+1})+\beta A(x^k-x^{k+1}).
\end{equation}
Substituting \eqref{Lema-1} into \eqref{Lema-1-1}, we get
\begin{eqnarray*}
  \|w^k-\tilde{w}^k\|_G^2&=& \tau\|x^k-x^{k+1}\|_D^2-(1-\tau)\beta\|A(x^k-x^{k+1})\|^2\\
   &&+\frac{1}{\beta}\|(\lambda^k-\lambda^{k+1})+\beta A(x^k-x^{k+1})\|^2 \\
   &=& \tau\|x^k-{x}^{k+1}\|_D^2+\tau\beta\|A(x^k-{x}^{k+1})\|^2+\frac{1}{\beta}\|\lambda^k-\lambda^{k+1}\|^2 \\
   &  & +2(\lambda^k-\lambda^{k+1})^TA(x^k-{x}^{k+1}),
\end{eqnarray*}
 and the proof is complete.
\end{proof}

Now we turn to deal with the crossing term $2(\lambda^k-\lambda^{k+1})^TA(x^k-{x}^{k+1})$ in \eqref{b7} and estimate a lower-bound in the quadratic forms. Two various lower-bounds for the crossing term $(\lambda^k-\lambda^{k+1})^TA(x^k-{x}^{k+1})$ are given in the following two lemmas, respectively.


\begin{lemma}
Let $\{w^k\}$ and $\{\tilde{w}^k\}$ be the sequences generated by the prediction-correction formulation \eqref{I-L-ALM-P}-\eqref{corrector} of the I-IDL-ALM \eqref{I-IDL-ALM}  for solving  \eqref{problem}. Then we have
\begin{eqnarray}\label{b8}
  (\lambda^k-\lambda^{k+1})^TA(x^k-x^{k+1}) & \geq & \Big\{\frac{1}{2}\tau\|x^k-{x}^{k+1}\|_D^2+\frac{1}{2}(1-\tau)\beta\|A(x^k-x^{k+1})\|^2\Big\} \nonumber \\
   &     & -\Big\{\frac{1}{2}\tau\|x^{k-1}-{x}^{k}\|_D^2+\frac{1}{2}(1-\tau)\beta\|A(x^{k-1}-x^{k})\|^2\Big\} \nonumber \\
   &     & -2(1-\tau)\beta\|A(x^k-x^{k+1})\|^2.
\end{eqnarray}
\end{lemma}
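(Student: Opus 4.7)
The target inequality couples iterates at three indices ($k-1$, $k$, $k+1$), which signals that the proof must invoke the $x$-subproblem optimality condition \eqref{I-LL-ALMxv} at two consecutive iterations and splice the results together. Concretely, at iteration $k$ the VI reads
$$\theta(x)-\theta(x^{k+1})+(x-x^{k+1})^T\bigl\{-A^T\tilde\lambda^{k}+\tau r(x^{k+1}-x^{k})\bigr\}\ge 0, \quad\forall\,x\in\mathcal{X},$$
and at iteration $k-1$ analogously with $\tilde\lambda^{k-1}$, $x^k$, $x^{k-1}$. Setting $x=x^{k}$ in the first and $x=x^{k+1}$ in the second, then adding, the objective values cancel and one obtains
$$(x^k-x^{k+1})^TA^T(\tilde\lambda^{k-1}-\tilde\lambda^{k})\;\ge\;\tau r\|x^k-x^{k+1}\|^2+\tau r(x^k-x^{k+1})^T(x^k-x^{k-1}).$$

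\textbf{Converting $\tilde\lambda$ to $\lambda$.} The corrector \eqref{corrector} gives $\tilde\lambda^{j}=\lambda^{j+1}+\beta A(x^{j+1}-x^{j})$ for each $j$, whence
$$\tilde\lambda^{k-1}-\tilde\lambda^{k}=(\lambda^{k}-\lambda^{k+1})+\beta A(x^{k}-x^{k-1})+\beta A(x^{k}-x^{k+1}).$$
Substituting into the displayed inequality above and isolating $(\lambda^k-\lambda^{k+1})^TA(x^k-x^{k+1})$ on the left yields, after collecting terms,
$$(\lambda^k-\lambda^{k+1})^TA(x^k-x^{k+1})\;\ge\;\tau r\|x^k-x^{k+1}\|^2-\beta\|A(x^k-x^{k+1})\|^2+\tau r(x^k-x^{k+1})^T(x^k-x^{k-1})+\beta(x^k-x^{k+1})^TA^TA(x^{k-1}-x^{k}).$$
Using $D=rI_n-\beta A^TA$, the first two diagonal terms rewrite as $\tau\|x^k-x^{k+1}\|_D^2-(1-\tau)\beta\|A(x^k-x^{k+1})\|^2$, and the two cross terms rewrite as $-\tau(x^k-x^{k+1})^TD(x^{k-1}-x^{k})+(1-\tau)\beta(x^k-x^{k+1})^TA^TA(x^{k-1}-x^{k})$.

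\textbf{Telescoping via Young's inequality.} Because $D$ is symmetric positive definite under \eqref{L}, the standard estimate $|u^TDv|\le\tfrac{1}{2}(\|u\|_D^2+\|v\|_D^2)$ applies, and the same inequality in the Euclidean norm controls the $A^TA$-cross term via $\|A\cdot\|$. Applying these to the two cross terms above (with signs chosen to give a lower bound) produces
$$-\tau(x^k-x^{k+1})^TD(x^{k-1}-x^{k})\;\ge\;-\tfrac{\tau}{2}\|x^k-x^{k+1}\|_D^2-\tfrac{\tau}{2}\|x^{k-1}-x^{k}\|_D^2,$$
$$(1-\tau)\beta(x^k-x^{k+1})^TA^TA(x^{k-1}-x^{k})\;\ge\;-\tfrac{(1-\tau)\beta}{2}\|A(x^k-x^{k+1})\|^2-\tfrac{(1-\tau)\beta}{2}\|A(x^{k-1}-x^{k})\|^2.$$
Substituting back, the $\tau\|x^k-x^{k+1}\|_D^2$ becomes $\tfrac{1}{2}\tau\|x^k-x^{k+1}\|_D^2$ with a matching $-\tfrac{1}{2}\tau\|x^{k-1}-x^{k}\|_D^2$, while the $-(1-\tau)\beta\|A(x^k-x^{k+1})\|^2$ picks up an additional $-\tfrac{1}{2}(1-\tau)\beta\|A(x^k-x^{k+1})\|^2$ totalling $-\tfrac{3}{2}(1-\tau)\beta\|A(x^k-x^{k+1})\|^2$, together with $-\tfrac{1}{2}(1-\tau)\beta\|A(x^{k-1}-x^{k})\|^2$. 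Observing that $\tfrac{1}{2}(1-\tau)\beta-2(1-\tau)\beta=-\tfrac{3}{2}(1-\tau)\beta$, this is exactly the right-hand side of \eqref{b8}.

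\textbf{Main obstacle.} The main care-point is bookkeeping: the identity $D=rI_n-\beta A^TA$ has to be used twice in opposite directions, once to absorb $\beta\|A(x^k-x^{k+1})\|^2$ into the $D$-norm and once to split the cross term into a $D$-piece plus an $A^TA$-piece, and each split leaves residual $(1-\tau)\beta$ contributions. Getting the $-2(1-\tau)\beta\|A(x^k-x^{k+1})\|^2$ tail in \eqref{b8} to come out with the correct coefficient requires tracking these residuals exactly; a misstep in either Young's inequality or the $D$/$A^TA$ decomposition would produce the wrong constant and spoil the telescoping needed in \eqref{mianidea}.
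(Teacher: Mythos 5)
Your proof is correct and follows essentially the same route as the paper: both combine the $x$-subproblem variational inequality \eqref{I-LL-ALMxv} at iterations $k$ and $k-1$ evaluated at $x=x^k$ and $x=x^{k+1}$, eliminate $\tilde{\lambda}$ via the corrector identity $\tilde{\lambda}^{k}=\lambda^{k+1}+\beta A(x^{k+1}-x^{k})$, and bound the resulting cross term by splitting it into a $D$-piece and an $A^TA$-piece via the Cauchy--Schwarz (Young) inequality. The only difference is cosmetic ordering --- the paper substitutes for $\tilde{\lambda}^k$ first and works with the aggregate matrix $D_0=\tau D-(1-\tau)\beta A^TA$ in \eqref{c3-A}, whereas you add the two VIs first and keep $r$ and $\beta$ explicit until the end --- and your final coefficients match \eqref{b8} exactly.
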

\begin{proof}
Our first goal is to rewrite the inequality \eqref{I-LL-ALMxv} as a form which does not contain $\tilde{x}^k$ and $\tilde{\lambda}^k$. To this end, it follows from \eqref{Lema-1} that  $$\tilde{x}^{k}=x^{k+1}\quad  \hbox{and} \quad \tilde{\lambda}^{k} = \lambda^{k+1}-\beta A(x^{k}-x^{k+1}).$$
Recall the matrix $D_0$ defined in \eqref{L0}. It holds that
\begin{eqnarray*}
-A^T\tilde{\lambda}^{k}+\tau r(\tilde{x}^{k}-x^k) &=& -A^T(\lambda^{k+1}-\beta A(x^{k}-x^{k+1}))+\tau r(x^{k+1}-x^k) \\
   &=&  -A^T{\lambda}^{k+1}+D_0({x}^{k+1}-x^k).
\end{eqnarray*}
Therefore, the inequality \eqref{I-LL-ALMxv} can be rewritten as
\begin{equation}\label{l2}
  \theta(x)-\theta(x^{k+1})+(x-x^{k+1})^T\{-A^T\lambda^{k+1}+D_0(x^{k+1}-x^k)\} \geq 0, \;\; \forall \; x\in\mathcal{X}.
\end{equation}
Note that \eqref{l2} also holds for $k:=k-1$. We thus obtain
\begin{equation}\label{l3}
  \theta(x)-\theta(x^k)+(x-x^k)^T\{-A^T\lambda^k+D_0(x^k-x^{k-1})\} \geq 0,   \;\;   \forall   \;   x\in\mathcal{X}.
\end{equation}
Setting  $x=x^k$  and $x=x^{k+1}$ in  \eqref{l2} and \eqref{l3}, respectively, and adding them, we get
\begin{eqnarray}\label{c3-A}
  \lefteqn{(\lambda^k-\lambda^{k+1})^TA(x^k-x^{k+1})} \nonumber \\
   &\qquad \geq & \|x^k-x^{k+1}\|_{D_0}^2+(x^k-x^{k+1})^TD_0(x^k-x^{k-1}).
\end{eqnarray}
 For the first part of the right side of  \eqref{c3-A}, we have
 \begin{eqnarray}\label{c3-B}
   \|x^k-x^{k+1}\|_{D_0}^2 &\overset{\eqref{L0}}{=} &\|x^k-{x}^{k+1}\|_{(\tau D\!-\!(1-\tau)\beta A^TA)}^2 \nonumber \\
    &=& \tau\|x^k-x^{k+1}\|_D^2-(1-\tau)\beta \|A(x^k-x^{k+1})\|^2.
 \end{eqnarray}
 For the second part  of the right side of \eqref{c3-A}, using Cauchy-Schwarz inequality,  we get
 \begin{eqnarray}\label{c3-C}
  \lefteqn{(x^k-x^{k+1})^TD_0(x^k-x^{k-1})} \nonumber\\
    &\qquad=& (x^k-x^{k+1})^T(\tau D - (1-\tau)\beta A^TA)(x^k\!-\!x^{k-1})  \nonumber \\
    &\qquad=& \tau(x^k-x^{k+1})^TD(x^k-x^{k-1})  -(1-\tau)\beta(A(x^k-x^{k+1}))^TA(x^k-x^{k-1})   \nonumber\\
    & \qquad\geq & -\frac{1}{2}\tau\big\{\|x^k-x^{k+1}\|_D^2+\|x^{k-1}-x^{k}\|_D^2\big\}\nonumber \\
    &   &-\frac{1}{2}(1-\tau)\beta\big\{\|A(x^k-x^{k+1})\|^2+\|A(x^{k-1}-x^k)\|^2\big\}.
 \end{eqnarray}
Substituting \eqref{c3-B} and \eqref{c3-C} into \eqref{c3-A}, the assertion \eqref{b8}  follows immediately.
\end{proof}

\begin{lemma}
Let $\{w^k\}$  be the sequence generated by the prediction-correction scheme \eqref{I-L-ALM-P}-\eqref{corrector} of the proposed I-IDL-ALM \eqref{I-IDL-ALM} for solving the studied model \eqref{problem}. Then, for any $\tau\in[\frac{3}{4},1]$, we have
\begin{eqnarray}\label{b9}
  \lefteqn{(\lambda^k-\lambda^{k+1})^TA(x^k-x^{k+1})} \nonumber \\
   &\qquad \geq & -(\tau -\frac{1}{2})\beta\|A(x^k-x^{k+1})\|^2-(\frac{5}{2}-2\tau)\frac{1}{\beta}\|\lambda^k-\lambda^{k+1}\|^2.
\end{eqnarray}
\end{lemma}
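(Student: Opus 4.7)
My plan is to derive \eqref{b9} purely from Cauchy--Schwarz plus one weighted Young inequality, together with a short algebraic estimate that is special to the interval $\tau\in[\tfrac{3}{4},1]$. In particular I would not need to invoke the $\lambda$-subproblem VI \eqref{I-LL-ALMY} or the projection onto $\Re_{+}^{m}$; the structural information from the algorithm required is already encoded in the iterates $\lambda^k,\lambda^{k+1},x^k,x^{k+1}$ themselves. This is in contrast with \eqref{b8}, whose proof genuinely used the $x$-VI at two consecutive steps.

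First I would note that for $\tau\in[\tfrac{3}{4},1]$ one has $2\tau-1>0$, so I can apply the weighted Young inequality $-uv\ge -\tfrac{c}{2}u^{2}-\tfrac{1}{2c}v^{2}$ with the scaling constant $c=(2\tau-1)\beta>0$. Combining this with Cauchy--Schwarz yields
\begin{equation*}
(\lambda^{k}-\lambda^{k+1})^{T}A(x^{k}-x^{k+1})\;\ge\;-(\tau-\tfrac{1}{2})\beta\,\|A(x^{k}-x^{k+1})\|^{2}-\tfrac{1}{(4\tau-2)\beta}\,\|\lambda^{k}-\lambda^{k+1}\|^{2}.
\end{equation*}
The choice $c=(2\tau-1)\beta$ is forced, since it is precisely the value that makes the $\|A(x^{k}-x^{k+1})\|^{2}$-coefficient match the $-(\tau-\tfrac{1}{2})\beta$ stated in \eqref{b9}.

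The second step is to verify that $\tfrac{1}{4\tau-2}\le \tfrac{5}{2}-2\tau$ for every $\tau\in[\tfrac{3}{4},1]$, so that the $\lambda$-coefficient above can be relaxed into the weaker one in \eqref{b9}. Since $4\tau-2>0$, this is equivalent to $(4\tau-2)(\tfrac{5}{2}-2\tau)\ge 1$, which after expansion becomes $8\tau^{2}-14\tau+6\le 0$, i.e.\ $2(4\tau-3)(\tau-1)\le 0$. This last inequality is immediate from $4\tau-3\ge 0$ and $\tau-1\le 0$ on $[\tfrac{3}{4},1]$, with equality exactly at the endpoints $\tau=\tfrac{3}{4}$ and $\tau=1$. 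Substituting back concludes the proof of \eqref{b9}.

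I expect the only mildly subtle point in this argument to be recognizing that the stated $\lambda$-coefficient $\tfrac{5}{2}-2\tau$ is \emph{not} the optimal one coming from Young's inequality with an optimized weight, but rather a strictly weaker linear-in-$\tau$ surrogate that agrees with the optimal value $\tfrac{1}{4\tau-2}$ exactly at the two endpoints of $[\tfrac{3}{4},1]$. This linearization is almost certainly dictated by a downstream need: the bound \eqref{b9} will be combined convexly with the telescoping estimate \eqref{b8}, and a linear form in $\tau$ is required in order for the coefficients to line up cleanly with the quadratic residual $\|w^{k}-\tilde{w}^{k}\|_{G}^{2}$ of \eqref{b7} and produce the key telescoping inequality \eqref{mianidea}.
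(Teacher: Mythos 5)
Your proof is correct and takes essentially the same route as the paper's: a weighted Cauchy--Schwarz/Young inequality with weight $(2\tau-1)\beta$, followed by the verification that $\frac{1}{4(\tau-\frac{1}{2})}\le \frac{5}{2}-2\tau$ on $[\frac{3}{4},1]$, which the paper establishes by observing that the concave function $h(\tau)=(\tau-\frac{1}{2})(\frac{5}{2}-2\tau)$ attains its minimum value $\frac{1}{4}$ at the endpoints, while you factor the equivalent quadratic as $2(4\tau-3)(\tau-1)\le 0$. The difference is purely cosmetic, and your side observations (that no VI structure is needed here, and that $\frac{5}{2}-2\tau$ is a linear surrogate matching the optimal coefficient only at the endpoints) are accurate.
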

\begin{proof}
Since $(\tau-\frac{1}{2}) >0$, it follows from Cauchy-Schwarz inequality that
\begin{eqnarray*}
  \lefteqn{(\lambda^k-\lambda^{k+1})^TA(x^k-x^{k+1})}  \\
   &\qquad \geq & -(\tau -\frac{1}{2})\beta\|A(x^k-x^{k+1})\|^2-   \frac{1}{4(\tau-\frac{1}{2})}\frac{1}{\beta}\|\lambda^k-\lambda^{k+1}\|^2.
\end{eqnarray*}
Thus, it suffices to prove that
  \begin{equation*}   
         \frac{1}{4(\tau-\frac{1}{2})}  \le   \frac{5}{2}-2\tau,   \quad   \forall  \; \tau\in[\frac{3}{4},1].
         \end{equation*}
Let $h(\tau):=(\tau -\frac{1}{2})(\frac{5}{2}-2\tau)$. It is easy to verify that $h(\tau)$ is a concave function and it reaches its minimum on the interval $[\frac{3}{4},1]$ at the endpoint $\tau=\frac{3}{4}$ or $\tau=1$.  Due to
  $h(\frac{3}{4}) = h(1) = \frac{1}{4} $,  we have
$$       h(\tau)=  (\tau -\frac{1}{2})(\frac{5}{2}-2\tau)  \ge  \frac{1}{4} , \quad \forall \; \tau\in[\frac{3}{4},1]. $$
This completes the proof of the lemma.
\end{proof}
Adding \eqref{b8} and \eqref{b9}, we get
\begin{eqnarray}\label{c7}
\lefteqn{2(\lambda^k-\lambda^{k+1})^TA(x^k-{x}^{k+1})} \nonumber \\
 &\geq& \big\{ \frac{1}{2}\tau\|x^k-{x}^{k+1}\|_D^2+\frac{1}{2}(1-\tau)\beta\|A(x^k-x^{k+1})\|^2\big\} \nonumber   \\
 & & -  \big\{\frac{1}{2}\tau\|x^{k-1}-{x}^{k}\|_D^2 +\frac{1}{2}(1-\tau)\beta\|A(x^{k-1}-x^{k})\|^2\big\} \nonumber \\
 &  &+ (\tau-\frac{3}{2})\beta\|A(x^k-x^{k+1})\|^2 -(\frac{5}{2} -2\tau)\frac{1}{\beta}\|\lambda^k-\lambda^{k+1}\|^2,
\end{eqnarray}
Then we obtain the following theorem immediately.
\begin{theorem}\label{uptheorem}
Let $\{w^k\}$ and $\{\tilde{w}^k\}$ be the sequences generated by the prediction-correction formulation \eqref{I-L-ALM-P}-\eqref{corrector} of the I-IDL-ALM \eqref{I-IDL-ALM} for  \eqref{problem}. Then, for any $\tau\in[\frac{3}{4},1]$, we have
\begin{eqnarray}\label{b15}
  \lefteqn{\|w^k-\tilde{w}^k\|_G^2}  \nonumber\\
  & \geq  & \frac{1}{2}\big\{\tau\|x^k-x^{k+1}\|_D^2+(1-\tau)\beta\|A(x^k-x^{k+1})\|^2\big\} \nonumber  \\
  & &-\frac{1}{2}\big\{\tau\|x^{k-1}-x^k\|_D^2+(1-\tau)\beta\|A(x^{k-1}-x^k)\|^2\big\} \nonumber \\
  &  &+\tau \|x^k-x^{k+1}\|_D^2+2(\tau-\frac{3}{4})\big\{\beta\|A(x^k-x^{k+1})\|^2+\frac{1}{\beta}\|\lambda^k-\lambda^{k+1}\|^2\big\}.
\end{eqnarray}
\end{theorem}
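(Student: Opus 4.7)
The proof of \cref{uptheorem} reduces to a direct substitution followed by careful bookkeeping of coefficients, so the plan is essentially to combine the three prepared ingredients in the correct order.

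The starting point will be the identity \eqref{b7}, which rewrites $\|w^k-\tilde{w}^k\|_G^2$ as the sum of three nonnegative quadratic terms plus a single indefinite cross-term $2(\lambda^k-\lambda^{k+1})^T A(x^k-x^{k+1})$. This cross-term is precisely where the indefiniteness (from $\tau<1$) could destroy the nonnegativity needed for convergence, so the whole task is to produce a usable lower bound for it. The natural idea is to split the factor $2$ and bound one copy of $(\lambda^k-\lambda^{k+1})^T A(x^k-x^{k+1})$ by \eqref{b8}, which introduces the desired telescoping structure $\phi(w^k,w^{k+1})-\phi(w^{k-1},w^k)$, and to bound the other copy by \eqref{b9}, which produces the extra $\varphi(w^k,w^{k+1})$-type positive slack. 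Adding the two bounds gives exactly the compound inequality \eqref{c7}.

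Once \eqref{c7} is in hand, the remainder of the proof is algebraic. I would substitute \eqref{c7} into \eqref{b7} and group the resulting expression by the five quadratic ``atoms'' that appear in the statement \eqref{b15}: $\|x^k-x^{k+1}\|_D^2$, $\|x^{k-1}-x^k\|_D^2$, $\|A(x^k-x^{k+1})\|^2$, $\|A(x^{k-1}-x^k)\|^2$ and $\|\lambda^k-\lambda^{k+1}\|^2$. The coefficients then have to be verified one by one: for $\|x^k-x^{k+1}\|_D^2$ the combination $\tau+\tfrac12\tau=\tfrac{3}{2}\tau$ matches the sum $\tfrac12\tau+\tau$ in \eqref{b15}; for $\beta\|A(x^k-x^{k+1})\|^2$ one checks $\tau+\tfrac12(1-\tau)+(\tau-\tfrac32)=\tfrac12(1-\tau)+2(\tau-\tfrac34)=\tfrac{3\tau}{2}-1$; and for $\tfrac1\beta\|\lambda^k-\lambda^{k+1}\|^2$ one gets $1-(\tfrac52-2\tau)=2\tau-\tfrac32=2(\tau-\tfrac34)$, as required. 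The two ``lag'' terms $\|x^{k-1}-x^k\|_D^2$ and $\|A(x^{k-1}-x^k)\|^2$ enter only from \eqref{b8} and carry over verbatim with coefficients $-\tfrac12\tau$ and $-\tfrac12(1-\tau)\beta$, matching \eqref{b15} exactly.

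Essentially, there is no genuine obstacle beyond this bookkeeping; the hypothesis $\tau\in[\tfrac34,1]$ is used exactly once, namely to legitimize the Cauchy–Schwarz split in \eqref{b9} (so that $\tau-\tfrac12>0$ and $h(\tau)\geq\tfrac14$). Once that is noted and the coefficient comparison is carried out, the inequality \eqref{b15} follows line-by-line, completing the proof. The mild subtlety worth flagging in the write-up is that the rearrangement deliberately preserves $\tau\|x^k-x^{k+1}\|_D^2$ and $2(\tau-\tfrac34)\beta\|A(x^k-x^{k+1})\|^2+2(\tau-\tfrac34)\tfrac{1}{\beta}\|\lambda^k-\lambda^{k+1}\|^2$ as the ``excess'' terms that will later play the role of $\varphi(w^k,w^{k+1})$ in the key recursion \eqref{mianidea} used by the convergence analysis of \cref{section4.2}.
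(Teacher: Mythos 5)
Your proposal is correct and follows exactly the paper's own route: add the two lower bounds \eqref{b8} and \eqref{b9} to obtain \eqref{c7}, substitute into \eqref{b7}, and regroup; your coefficient checks ($\tfrac{3}{2}\tau$, $\tfrac{3\tau}{2}-1$, and $2(\tau-\tfrac34)$) all match the paper's rearrangement.
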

\begin{proof}
Substituting \eqref{c7} into \eqref{b7},  we obtain
\begin{eqnarray*}
 \|w^k-\tilde{w}^k\|_G^2& \geq & \tau \|x^k-{x}^{k+1}\|_D^2+\tau\beta\|A(x^k-{x}^{k+1})\|^2+  \frac{1}{\beta}\|\lambda^k-\lambda^{k+1}\|^2  \\
   &  & +\big\{ \frac{1}{2}\tau\|x^k-{x}^{k+1}\|_D^2+\frac{1}{2}(1-\tau)\beta\|A(x^k-x^{k+1})\|^2\big\}   \\
   &  &  -  \big\{\frac{1}{2}\tau\|x^{k-1}-{x}^{k}\|_D^2 +\frac{1}{2}(1-\tau)\beta\|A(x^{k-1}-x^{k})\|^2\big\}  \\
   &  &   +  (\tau-\frac{3}{2})\beta\|A(x^k-x^{k+1})\|^2 -(\frac{5}{2} -2\tau)\frac{1}{\beta}\|\lambda^k-\lambda^{k+1}\|^2.
\end{eqnarray*}
It can be further summarized as
\begin{eqnarray*}
  \lefteqn{\|w^k-\tilde{w}^k\|_G^2}  \\
  &\quad \geq  & \frac{1}{2}\big\{\tau\|x^k-x^{k+1}\|_D^2+(1-\tau)\beta\|A(x^k-x^{k+1})\|^2\big\}   \\
  &         &-\frac{1}{2}\big\{\tau\|x^{k-1}-x^k\|_D^2+(1-\tau)\beta\|A(x^{k-1}-x^k)\|^2\big\}  \\
  &         &+\,\tau \|x^k-x^{k+1}\|_D^2+2(\tau-\frac{3}{4})\big\{\beta\|A(x^k-x^{k+1})\|^2+\frac{1}{\beta}\|\lambda^k-\lambda^{k+1}\|^2\big\},
\end{eqnarray*}
and the proof is complete.
\end{proof}

Obviously, the assertion \eqref{b15} corresponds to the desirable inequality \eqref{mianidea} by defining the non-negative functions $\phi(\cdot,\cdot)$ and $\varphi(\cdot, \cdot)$ as below:
$$\phi(w^k,w^{k+1}):=\frac{1}{2}\big\{\tau\|x^k-x^{k+1}\|_D^2+(1-\tau)\beta\|A(x^k-x^{k+1})\|^2\big\}$$
and
$$\varphi(w^k,w^{k+1}):=\tau \|x^k-x^{k+1}\|_D^2+2(\tau-\frac{3}{4})\big\{\beta\|A(x^k-x^{k+1})\|^2+\frac{1}{\beta}\|\lambda^k-\lambda^{k+1}\|^2\big\}.$$

\subsection{Global convergence}\label{section4.2}
To show the global convergence of the I-IDL-ALM \eqref{I-IDL-ALM} for the convex programming problem \eqref{problem} with linear inequality constraints,  we first prove an essential inequality which paves the way to the convergence proof of \eqref{I-IDL-ALM} by the following lemma.

\begin{lemma}\label{lemma-convergence}
Let $\{w^k\}$ and $\{\tilde{w}^k\}$ be the sequences generated by the prediction-correction scheme \eqref{I-L-ALM-P}-\eqref{corrector} of the I-IDL-ALM \eqref{I-IDL-ALM} for solving  \eqref{problem}. Then, for any  $\tau\in[\frac{3}{4},1]$ and  $w^\ast\in\Omega^\ast$,  we have
\begin{eqnarray}\label{b11}
  \lefteqn{\|w^{k+1}-w^{\ast}\|_H^2+\frac{1}{2}\big\{\tau\|x^k-x^{k+1}\|_D^2+(1-\tau)\beta\|A(x^k-x^{k+1})\|^2\big\}} \nonumber \\
&\quad \leq & \|w^k-w^\ast\|_H^2+\frac{1}{2}\big\{\tau\|x^{k-1}-x^k\|_D^2+(1-\tau)\beta\|A(x^{k-1}-x^{k})\|^2\big\}  \nonumber\\
&       &-\,\big\{\tau \|x^k-x^{k+1}\|_D^2+2(\tau-\frac{3}{4})(\beta\|A(x^k-x^{k+1})\|^2+\frac{1}{\beta}\|\lambda^k-\lambda^{k+1}\|^2)\big\}.
\end{eqnarray}
\end{lemma}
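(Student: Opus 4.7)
The plan is to combine Lemma \ref{KLemma} with Theorem \ref{uptheorem} and then invoke the variational characterization of $w^\ast$ to kill the left-hand side of \eqref{l1}. Concretely, I would start from the crucial inequality \eqref{l1} and specialize it at $w=w^\ast$, yielding
\[
\theta(x^\ast)-\theta(\tilde{x}^k)+(w^\ast-\tilde{w}^k)^TF(w^\ast)\ge \tfrac{1}{2}\{\|w^\ast-w^{k+1}\|_H^2-\|w^\ast-w^k\|_H^2\}+\tfrac{1}{2}\|w^k-\tilde{w}^k\|_G^2,
\]
where I have already used the monotonicity identity \eqref{FMON} to replace $F(w^\ast)$ in the inner product (equivalently, take $w=w^\ast$ in the form of \eqref{l1} which already contains $F(w)$).

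Next, I would use that $w^\ast\in\Omega^\ast$ satisfies the variational inequality \eqref{VIU}. Testing \eqref{VIU} with the feasible point $w:=\tilde{w}^k\in\Omega$ gives
\[
\theta(\tilde{x}^k)-\theta(x^\ast)+(\tilde{w}^k-w^\ast)^TF(w^\ast)\ge 0,
\]
i.e., the left-hand side of the inequality above is non-positive. Consequently, after multiplying through by $2$,
\[
\|w^{k+1}-w^\ast\|_H^2+\|w^k-\tilde{w}^k\|_G^2\le \|w^k-w^\ast\|_H^2.
\]

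Finally, I would plug in the lower bound on $\|w^k-\tilde{w}^k\|_G^2$ provided by Theorem \ref{uptheorem}, i.e.\ \eqref{b15}, which (using the non-negative functions $\phi$ and $\varphi$ defined right after \eqref{uptheorem}) reads
\[
\|w^k-\tilde{w}^k\|_G^2\ \ge\ \phi(w^k,w^{k+1})-\phi(w^{k-1},w^k)+\varphi(w^k,w^{k+1}),
\]
and valid precisely for $\tau\in[\tfrac34,1]$. Substituting and rearranging yields \eqref{b11} after re-expanding $\phi$ and $\varphi$ by their definitions. There is essentially no hard step here: the analytic heavy lifting was already performed in Theorem \ref{uptheorem} (which packages the tight treatment of the cross term $2(\lambda^k-\lambda^{k+1})^TA(x^k-x^{k+1})$ exploiting the constraint $\tau\ge 3/4$). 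The only subtlety to watch is keeping the factors of $\tfrac12$ straight when passing from \eqref{l1} (which carries $\tfrac12\|w^k-\tilde{w}^k\|_G^2$) to the final statement, where the coefficient on $\phi$ comes out as $1$ (since the definition of $\phi$ already absorbs a $\tfrac12$) while $\varphi$ retains its full weight via the factor of $2$ multiplication.
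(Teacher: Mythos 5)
Your proposal is correct and follows essentially the same route as the paper: both combine \eqref{l1} with the lower bound \eqref{b15} and use $\theta(\tilde{x}^k)-\theta(x^\ast)+(\tilde{w}^k-w^\ast)^TF(w^\ast)\ge 0$ to dispose of the left-hand side; the only difference is that the paper substitutes \eqref{b15} into \eqref{l1} before setting $w=w^\ast$, whereas you do these two steps in the opposite order. Your bookkeeping of the factors of $\tfrac12$ is also consistent with the final form of \eqref{b11}.
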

\begin{proof}Substituting \eqref{b15} into \eqref{l1},  we have
\begin{eqnarray}\label{b10}
  \lefteqn{\theta(x)-\theta(\tilde{x}^k)+(w-\tilde{w}^k)^TF(w)} \nonumber \\
 & \quad  \geq & \big\{\frac{1}{2}\|w-w^{k+1}\|_H^2+\frac{1}{4}(\tau\|x^k-{x}^{k+1}\|_D^2+(1-\tau)\beta\|A(x^k-x^{k+1})\|^2)\big\} \nonumber    \\
 &         & -\, \big\{\frac{1}{2}\|w-w^{k}\|_H^2+\frac{1}{4}(\tau\|x^{k-1}-x^{k}\|_D^2+(1-\tau)\beta\|A(x^{k-1}-x^{k})\|^2)\big\}  \nonumber   \\
 &         & +\, \frac{\tau}{2} \|x^k-{x}^{k+1}\|_D^2+(\tau-\frac{3}{4})\big\{\beta\|A(x^k-x^{k+1})\|^2+\frac{1}{\beta}\|\lambda^k-\lambda^{k+1}\|^2\big\}.
\end{eqnarray}
Setting  $w$ as arbitrary $w^\ast\in\Omega^\ast$ in \eqref{b10},  it further implies that
\begin{eqnarray*}
 \lefteqn{\|w^k-w^\ast\|_H^2+\frac{1}{2}\big\{\tau\|x^{k-1}-x^{k}\|_D^2+(1-\tau)\beta\|A(x^{k-1}-x^{k})\|^2\big\}}  \\
 \lefteqn{-\,\big\{\|w^{k+1}-w^{\ast}\|_H^2+\frac{1}{2}(\tau\|x^k-x^{k+1}\|_D^2+(1-\tau)\beta\|A(x^k-x^{k+1})\|^2)\big\}}  \\
 &\qquad \geq & \tau \|x^k-{x}^{k+1}\|_D^2+2(\tau-\frac{3}{4})\big\{\beta\|A(x^k-x^{k+1})\|^2+\frac{1}{\beta}\|\lambda^k-\lambda^{k+1}\|^2\big\} \\
 &       &+\,2\big\{\theta(\tilde{x}^k)-\theta(x^\ast)+(\tilde{w}^k-w^\ast)^TF(w^\ast)\big\}.
\end{eqnarray*}
Note that
$$\theta(\tilde{x}^k)-\theta(x^\ast)+(\tilde{w}^k-w^\ast)^TF(w^\ast)\geq0.$$
The assertion of this lemma follows immediately.
\end{proof}

According to Lemma \ref{lemma-convergence}, now we are ready to show the global convergence of the proposed I-IDL-ALM \eqref{I-IDL-ALM}, and it is summarized in the following theorem.
\begin{theorem}\label{theo-convergence}
Let $\{w^k\}$ and $\{\tilde{w}^k\}$ be the sequences generated by the prediction-correction formulation \eqref{I-L-ALM-P}-\eqref{corrector} of the I-IDL-ALM \eqref{I-IDL-ALM} for solving the studied model \eqref{problem}. Then, for any $\tau\in(0.75,1)$,  the sequence $\{w^k\}$ converges to some $w^\infty\in\Omega^\ast$.
\end{theorem}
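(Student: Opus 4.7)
The plan is to use Lemma~\ref{lemma-convergence} as a Fej\'er-monotone-style descent inequality, but adapted to a Lyapunov functional that absorbs the indefiniteness of $D_0$. Define, for any $w^\ast\in\Omega^\ast$,
\begin{equation*}
  \Phi_k(w^\ast):=\|w^k-w^\ast\|_H^2+\frac{1}{2}\bigl\{\tau\|x^{k-1}-x^k\|_D^2+(1-\tau)\beta\|A(x^{k-1}-x^k)\|^2\bigr\}.
\end{equation*}
Since $D$ in \eqref{L} is positive definite and $H$ in \eqref{H} is positive definite, $\Phi_k(w^\ast)\geq 0$. The inequality \eqref{b11} then reads
\begin{equation*}
  \Phi_{k+1}(w^\ast)\leq \Phi_k(w^\ast)-\Psi_k,\quad \Psi_k:=\tau\|x^k-x^{k+1}\|_D^2+2\Bigl(\tau-\tfrac{3}{4}\Bigr)\Bigl\{\beta\|A(x^k-x^{k+1})\|^2+\tfrac{1}{\beta}\|\lambda^k-\lambda^{k+1}\|^2\Bigr\}.
\end{equation*}
For $\tau\in(0.75,1)$, all coefficients in $\Psi_k$ are strictly positive, so $\{\Phi_k(w^\ast)\}$ is non-increasing and non-negative, hence convergent, and $\sum_{k=0}^{\infty}\Psi_k<\infty$.

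From the summability of $\Psi_k$ and positive definiteness of $D$ I conclude $\|x^k-x^{k+1}\|\to 0$ and $\|\lambda^k-\lambda^{k+1}\|\to 0$. By \eqref{corrector} we have $w^k-\tilde{w}^k=M^{-1}(w^k-w^{k+1})$; since $M$ is invertible with bounded inverse, also $\|w^k-\tilde{w}^k\|\to 0$. Moreover the monotonicity of $\Phi_k(w^\ast)$ in the $H$-norm (which dominates $\|w^k-w^\ast\|$ up to a positive factor since $H$ is positive definite) yields boundedness of $\{w^k\}$, and hence of $\{\tilde{w}^k\}$. Therefore there exists a subsequence $\{w^{k_j}\}$ with $w^{k_j}\to w^\infty$ for some $w^\infty=(x^\infty;\lambda^\infty)\in\mathcal{X}\times\Re_+^m=\Omega$; correspondingly $\tilde{w}^{k_j}\to w^\infty$.

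Next I pass to the limit in the prediction VI~\eqref{vi1}. For fixed $w\in\Omega$, the right-hand side $(w-\tilde{w}^{k_j})^TQ(w^{k_j}-\tilde{w}^{k_j})$ tends to $0$ because $\|w^{k_j}-\tilde{w}^{k_j}\|\to 0$ and $\{w-\tilde{w}^{k_j}\}$ is bounded; by lower semi-continuity of $\theta$ and continuity of the affine operator $F$, the left-hand side tends to $\theta(x)-\theta(x^\infty)+(w-w^\infty)^TF(w^\infty)$. Hence $w^\infty$ satisfies \eqref{VIU} and $w^\infty\in\Omega^\ast$.

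Finally, to upgrade subsequential convergence to full convergence, I apply the descent inequality with the specific choice $w^\ast:=w^\infty$. The sequence $\Phi_k(w^\infty)$ is non-increasing and non-negative, hence convergent; along the subsequence $\{k_j\}$, both $\|w^{k_j}-w^\infty\|_H^2\to 0$ and the additional penalty term $\tfrac{1}{2}\{\tau\|x^{k_j-1}-x^{k_j}\|_D^2+(1-\tau)\beta\|A(x^{k_j-1}-x^{k_j})\|^2\}\to 0$, so $\Phi_{k_j}(w^\infty)\to 0$, which forces $\Phi_k(w^\infty)\to 0$ and therefore $\|w^k-w^\infty\|_H\to 0$, i.e.\ $w^k\to w^\infty\in\Omega^\ast$. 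The main obstacle is the third step---passing to the limit in the variational inequality \eqref{vi1}---because one must argue simultaneously that the coupling term vanishes and that $\tilde{w}^{k_j}$ (not just $w^{k_j}$) converges to the same limit; everything else reduces to carefully exploiting the positive definiteness of $H$ and $D$ together with the strict inequality $\tau>3/4$.
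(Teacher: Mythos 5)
Your proposal is correct and follows essentially the same route as the paper: summing the descent inequality \eqref{b11} to obtain summability of the residual term, deducing $\|w^k-w^{k+1}\|\to 0$ and boundedness, identifying a subsequential limit $w^\infty\in\Omega^\ast$ by passing to the limit in the prediction variational inequality, and then upgrading to full convergence via the quasi-Fej\'er monotonicity with $w^\ast:=w^\infty$. Your final step (showing $\Phi_k(w^\infty)\to 0$ because the monotone sequence has a subsequence tending to zero) is in fact a slightly tighter formulation of the paper's closing ``unique cluster point'' argument.
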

\begin{proof}
Our first goal is to show that the generated sequences $\{w^k\}$ and $\{\tilde{w}^k\}$ are bounded. To this end, summing \eqref{b11} over $k=1,2,\ldots,\infty$, we have
\begin{eqnarray*}
\lefteqn{\sum_{k=1}^{\infty}\big\{\tau \|x^k-x^{k+1}\|_D^2+2(\tau-\frac{3}{4})(\beta\|A(x^k-x^{k+1})\|^2+\frac{1}{\beta}\|\lambda^k-\lambda^{k+1}\|^2)\big\}}    \\
&\quad  \leq & \big\{\|w^1-w^\ast\|_H^2+\frac{1}{2}\tau\|x^0-x^1\|_D^2+\frac{1}{2}(1-\tau)\beta\|A(x^0-x^1)\|^2\big\},
\end{eqnarray*}
which further implies that
$$\lim_{k\rightarrow\infty}\tau \|x^k-x^{k+1}\|_D^2+2(\tau-\frac{3}{4})\big\{\beta\|A(x^k-x^{k+1})\|^2+\frac{1}{\beta}\|\lambda^k-\lambda^{k+1}\|^2\big\}=0.$$
Consequently, we have $\lim_{k\rightarrow\infty}\|x^k-x^{k+1}\|_D^2=0$ and $\lim_{k\rightarrow\infty}\|\lambda^k-\lambda^{k+1}\|^2=0$, which further implies that
\begin{equation}\label{keytail}
  \lim_{k\rightarrow\infty}\|w^k-w^{k+1}\|=0.
\end{equation}
Moreover, since the matrix $M$ is non-singular, it follows from \eqref{vi2} and \eqref{keytail}  that
 \begin{equation}\label{keytail1}
  \lim_{k\rightarrow\infty}\|w^k-\tilde{w}^k\|=0.
\end{equation}
For any $w^\ast\in\Omega^\ast$ and the integer $k\geq1$, it follows from \eqref{b11} that
\begin{eqnarray}\label{Fmonoton}
  \lefteqn{\|w^{k+1}-w^\ast\|_H^2} \nonumber  \\
  &\quad \leq & \|w^k-w^\ast\|_H^2+\frac{1}{2}\big\{\tau\|x^{k-1}-x^k\|_D^2+(1-\tau)\beta\|A(x^{k-1}-x^k)\|^2\big\}  \nonumber \\
  & \quad \leq & \cdots\leq\|w^1-w^\ast\|_H^2+\frac{1}{2}\big\{\tau\|x^0-x^1\|_D^2+(1-\tau)\beta\|A(x^0-x^1)\|^2\big\},
\end{eqnarray}
the sequence  $\{w^k\}$ is thus bounded. Moreover,  it follows from \eqref{keytail1} that the sequence  $\{\tilde{w}^k\}$ is also bounded.

Then, we show that the sequence $\{w^k\}$ converges to some $w^{\infty}\in\Omega^\ast$. Let $w^\infty$ be a cluster point of $\{\tilde{w}^k\}$, and $\{\tilde{w}^{k_j}\}$  be the associated subsequence converging to $w^\infty$. Combining \eqref{vi1} and \eqref{keytail1}, it follows from the continuity of $\theta$ and $F$ that
$$ \theta(x) -\theta(x^\infty) +(w-w^{\infty})^T F(w^{\infty}) \geq 0, \;\; \forall \;  w\in\Omega,$$
and thus $w^\infty\in\Omega^\ast$. Furthermore, according to \eqref{Fmonoton}, we have
$$\|w^{k+1}-w^\infty\|_H^2\leq\|w^k-w^\infty\|_H^2+\frac{1}{2}\big\{\tau\|x^{k-1}-x^k\|_D^2+(1-\tau)\beta\|A(x^{k-1}-x^k)\|^2\big\}.$$
Hence, the sequence $\{\|w^k - w^{\infty}\|_H^2\}_{k \geq 0}$ is nonincreasing, and it is bounded away below from zero.  Moreover, it follows from $\lim_{j\rightarrow\infty}\tilde{w}^{k_j}=w^\infty$ and \eqref{keytail1} that $\lim_{j\rightarrow\infty}w^{k_j}=w^\infty$. Therefore, we have $\{w^k\}$ converges to $w^\infty$ and the proof is complete.
\end{proof}

\begin{remark}
Following the same analysis techniques in, e.g., \cite{he2020optimal,he20121}, it is trivial to prove that the proposed method  \eqref{I-IDL-ALM} also enjoys a worst-case $\mathcal{O}(1/N)$ convergence rate measured by the iteration complexity. Here, we opt to omit these meticulous details for succinctness.
\end{remark}

\section{Numerical experiments}\label{section5}
\setcounter{equation}{0}

In this section, we evaluate the numerical performance of the proposed I-IDL-ALM \eqref{I-IDL-ALM} by solving the support vector machine for classification and continuous max-flow models for image segmentation, which can be well modelled in terms of convex minimization problems \eqref{problem} with linear inequality constraints.  The preliminary numerical results illustrate that the new algorithm can converge efficiently with a smaller regularization term and perform competitively with the influential primal-dual algorithm (PDA) proposed in \cite{chambolle2011first} (as a benchmark for comparison). Our algorithms were implemented in Python 3.9 and conducted in a Lenovo  computer with a 2.20GHz Intel Core i7-8750H CPU and 16GB memory.

\subsection{Linear support vector machine (SVM)}

Let us first consider the linear support vector machine (SVM)  model. Suppose the set of training data given by $\mathcal{D}=\big\{(x_i,y_i)\;|\;x_i\in\Re^n,\;y_i\in\{-1,1\},\; i=1,\ldots,m\big\}$ is linearly separable, in which $x$ and $y$ denote the attribute and the label of a sample, respectively.
The linear SVM aims at finding the maximum-margin hyperplane separating two classes of data as much as possible. As discussed in \cite{cristianini2000introduction}, its mathematical form is
\begin{equation*}
 \begin{split}
 \min_{w\in\Re^n,a\in\Re} ~~&\frac{1}{2}{\|w\|}^{2}\\
 \rm{s.t.}~~&y_{i}(w^{T}x_{i}+a)\geq 1, \;\; i=1,\ldots,m.
 \end{split}\end{equation*}
Obviously,  such a model can be compactly regrouped as
\begin{equation}\label{lsvm}
  \min\{\frac{1}{2}{\|Hu\|}^{2}\;|\;Au\geq b\},
\end{equation}
by setting
 $$u=\left(\!\!
       \begin{array}{c}
         w \\
         a \\
       \end{array}
     \!\!\right)
 ,\;\; H=\left(\!\!
                \begin{array}{cc}
                  I_{n\times n} & 0_{n\times 1} \\[0.1cm]
                  0_{1\times n} & 0 \\
                \end{array}
              \!\!\right),\;\;
              A=\left(\!\!
                  \begin{array}{c}
                    y_1({x_1}^T,1) \\[-0.1cm]
                    \vdots \\
                    y_m({x_m}^T,1) \\
                  \end{array}
               \!\! \right)\;\; \hbox{and} \;\;
                b=\left(\!\!
                    \begin{array}{c}
                      1 \\[-0.1cm]
                      \vdots \\
                      1 \\
                    \end{array}
                 \!\! \right).
 $$
When the proposed I-IDL-ALM \eqref{I-IDL-ALM} is applied to \eqref{lsvm}, the resulting scheme is
 \begin{equation}\label{SVMI-ALM}
 \hbox{(I-IDL-ALM)}
   \left\{\begin{array}{ccl}
 \tilde{\lambda}^{k} &=&  [\lambda^k - \beta(Au^k-b)]_{+},\\[0.1cm]
  u^{k+1} &=& (H^TH+\tau r I_{n+1})^{-1}(A^T\tilde{\lambda}^{k}+\tau r u^k), \\[0.1cm]
\lambda^{k+1} &=&  \tilde{\lambda}^k + \beta A(u^k-u^{k+1}).
 \end{array}\right.
 \end{equation}

To simulate, we follow some standard way (e.g., as elucidated on \url{https://web.stanford.edu/~boyd/papers/admm/svm/linear_svm_example.html}) to generate the  random training data satisfying a normal distribution,  and we set $n=2$ for visualizing the classification results. Moreover, the stopping criterion for \eqref{svm} is defined as
$$\|u^{k+1}-u^k\|<10^{-11}.$$
For efficiently implementing the tested algorithms, by selecting out from a great number of different values, we empirically choose the specific parameter settings as follows.
\begin{itemize}
  \item PDA: $r=\sqrt{\rho(A^TA)+0.1}$ and $r=\sqrt{\rho(A^TA)+0.1}$;
  \item  I-IDL-ALM \eqref{I-IDL-ALM}: $\beta=0.01$, $r=\beta(\rho(A^TA)+0.1)$ and $\tau=0.75,1.00,1.20,1.50$.
\end{itemize}

\begin{figure}[H]
\centering
\subfigure[$m=100$]{
\includegraphics[width=8cm]{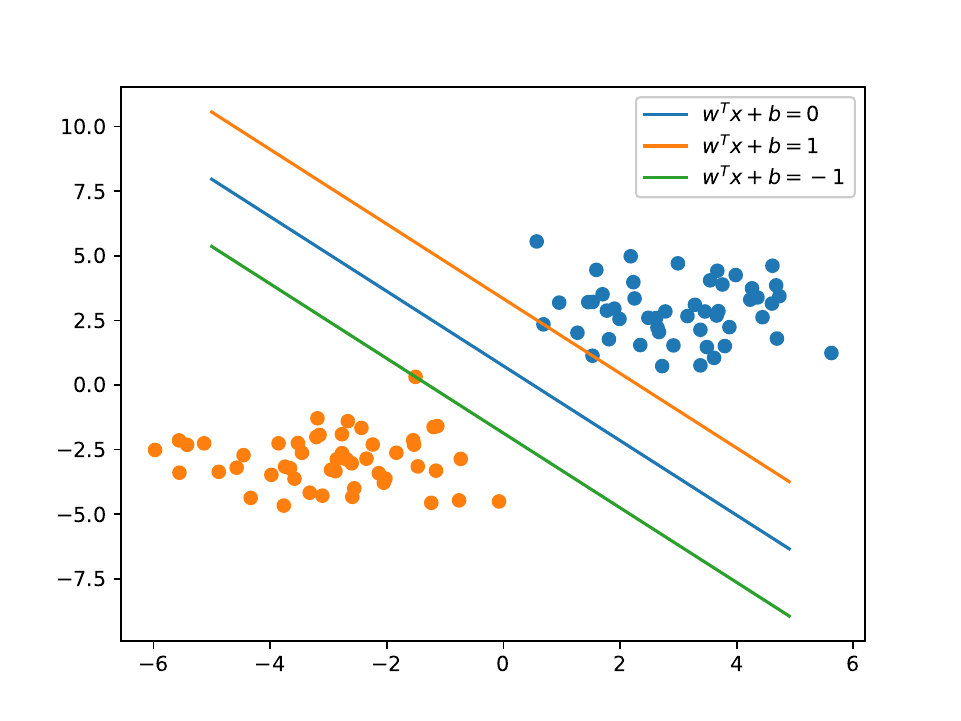}
}\hspace{-10mm}
\subfigure[$m=100$]{
\includegraphics[width=8cm]{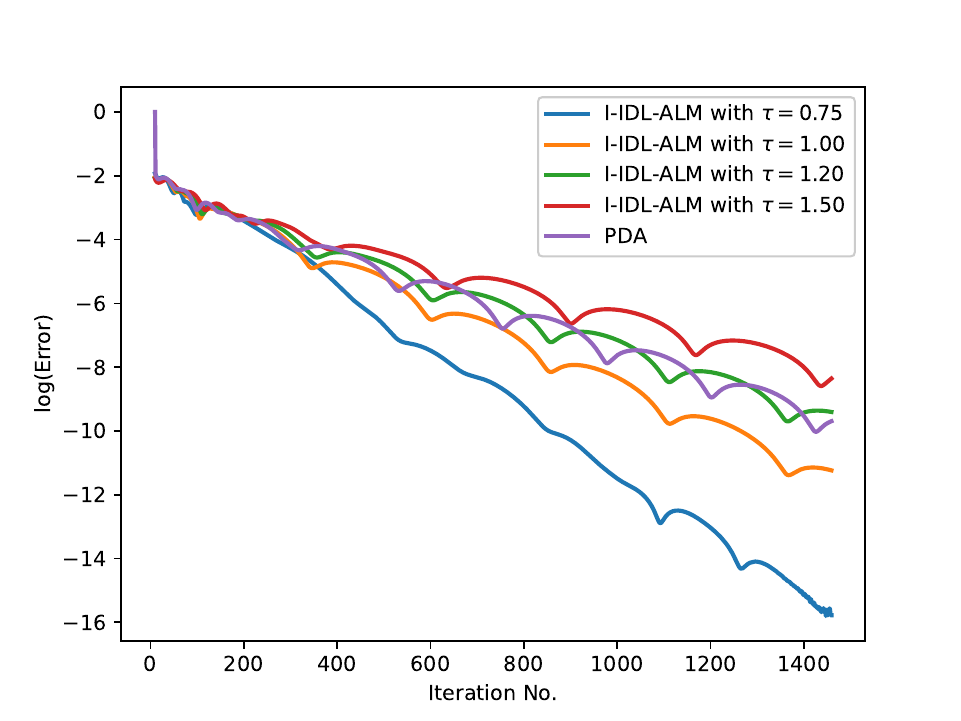}
}
\\ \vspace{-0.25cm}
\subfigure[$m=500$]{
\includegraphics[width=8cm]{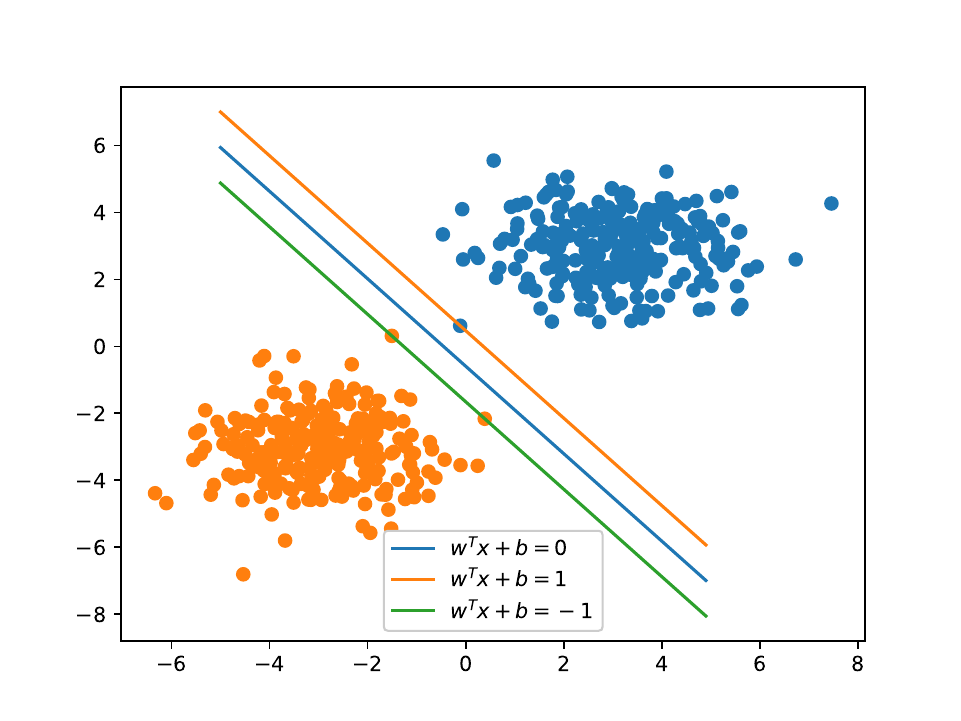}
}\hspace{-10mm}
\subfigure[$m=500$]{
\includegraphics[width=8cm]{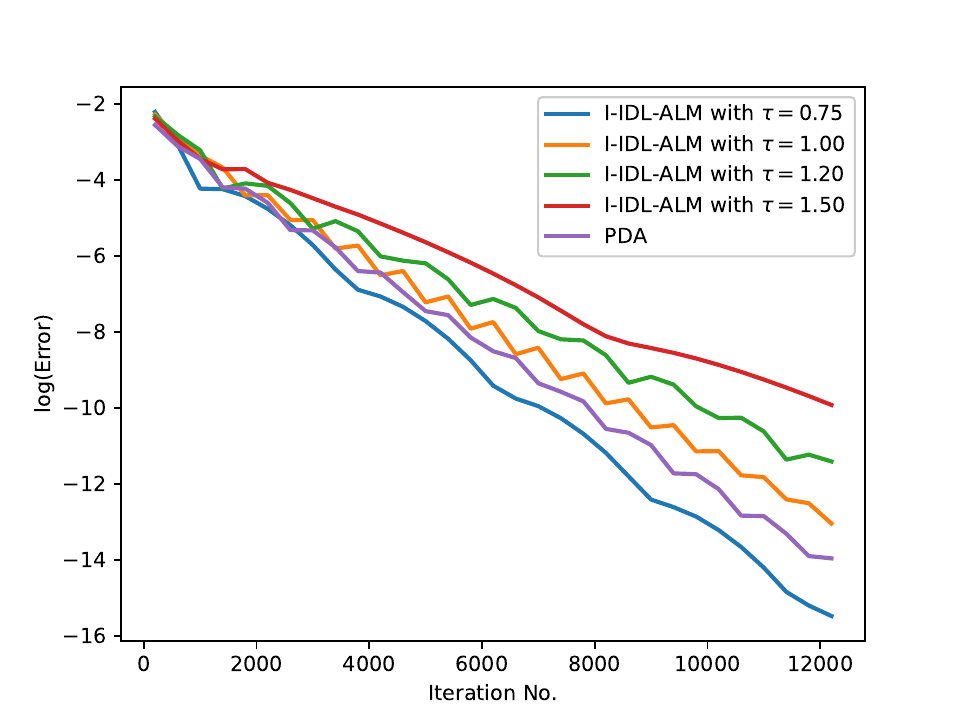}
}\\ \vspace{-0.25cm}
\subfigure[$m=1000$]{
\includegraphics[width=8cm]{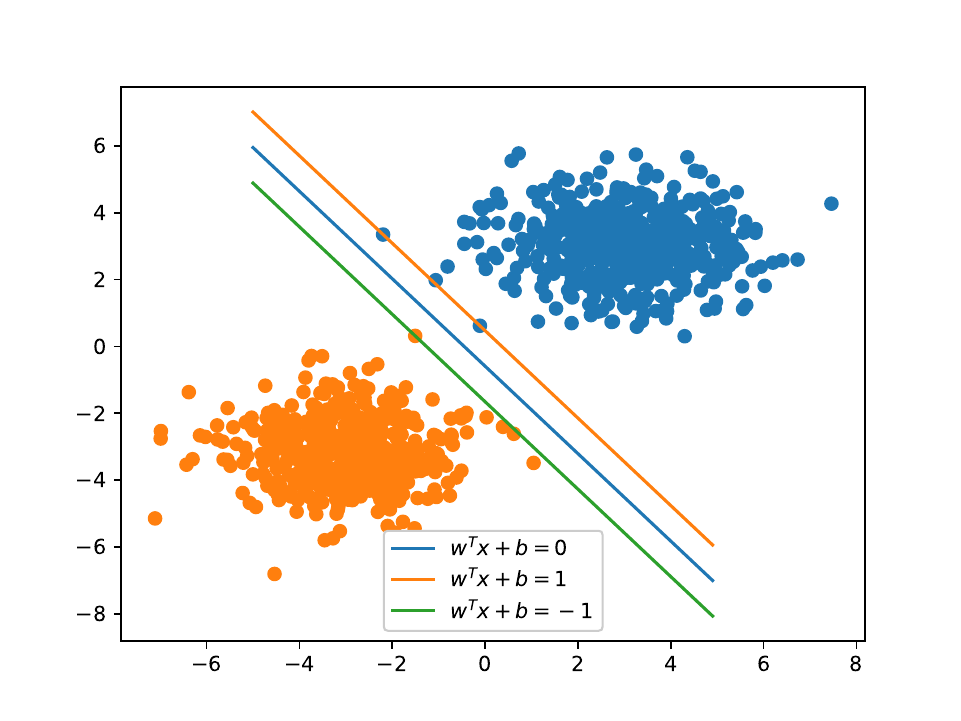}
}\hspace{-10mm}
\subfigure[$m=1000$]{
\includegraphics[width=8cm]{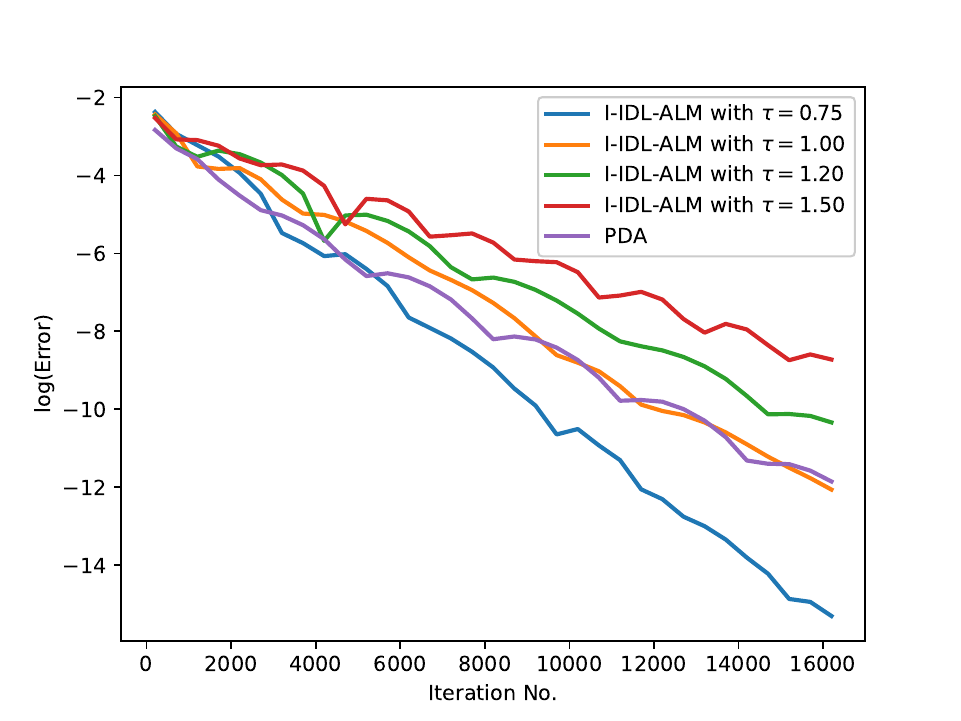}
}\vspace{-0.25cm}
\caption{Left column: some classification results solved by the I-IDL-ALM  \eqref{I-IDL-ALM} with $\tau=0.75$. Right column: the convergence curves of the PDA and the I-IDL-ALM  \eqref{I-IDL-ALM} with various $\tau$ for the linear SVM \eqref{svm} with $m=100,500,1000$. }
\label{svmf}
\end{figure}

\begin{table}[H]
\caption{Numerical results of the linear SVM \eqref{lsvm} with different $m$, solved by the PDA, the I-IDL-ALM \eqref{I-IDL-ALM} with various values of $\tau$.}
\vspace{0.15cm}
\centering
\begin{tabular}{l ccccccc cccccccccc}
\toprule
 \multirow{2}{*}{$m$} &      \multicolumn{2}{c}{PDA}   &   \multicolumn{2}{c}{$\tau=0.75$}  &  \multicolumn{2}{c}{$\tau=1.00$}   &  \multicolumn{2}{c}{$\tau=1.20$} &  \multicolumn{2}{c}{$\tau=1.50$}  \cr \cmidrule(lr){2-3} \cmidrule(lr){4-5} \cmidrule(lr){6-7} \cmidrule(lr){8-9} \cmidrule(lr){10-11}
                                           &   Iter      &   CPU  &   Iter        &   CPU   &   Iter      &   CPU  &   Iter        &   CPU  &   Iter        &   CPU \cr
\midrule
    $100$      &   3635     &    0.96      &   959       &    0.25     &   1346     &    0.34      &   1784     &    0.45      &   2210       &    0.54          \\
    $200$      &   4974     &    1.21      &   2286     &    0.56     &   3133     &    0.77      &   3704     &    0.94      &   4442       &    1.12           \\
    $300$      &   10106   &    2.45      &   6963     &    1.87     &   8283     &    2.24      &   9639     &    2.62      &   11743     &    3.20           \\
    $400$      &   11102   &    3.56      &   8713     &    2.78     &   10410   &    3.34      &   11866   &    3.79      &   14815     &    4.82           \\
    $500$      &   9640     &    3.02      &   7729     &    2.51     &   9491     &    3.07      &   11041   &    3.61      &   13073     &    4.31           \\
    $600$      &   10548   &    3.44      &   8859     &    2.93     &   11024   &    3.64      &   12789   &    4.14      &   15308     &    5.18           \\
    $700$      &   11388   &    3.83      &   9859     &    3.31     &   12432   &    4.10      &   14397   &    4.85      &   17413     &    5.76           \\
    $800$      &   11592   &    3.80      &   9715     &    3.22     &   11196   &    3.70      &   13519   &    4.42      &   17006     &    5.75           \\
    $900$      &   12305   &    4.17      &   10179   &    3.31     &   11602   &    3.77      &   15027   &    5.10      &   19380     &    6.31           \\
    $1000$    &   12861   &    4.18      &   9326     &    3.00     &   13857   &    4.52      &   16984   &    5.54      &   21120     &    7.06           \\
    $2000$    &   32414   &   10.99     &   28580   &    9.64     &   36243   &    12.23    &   41925   &    14.33    &   50770     &    18.28         \\
  \bottomrule
 \end{tabular}
 \label{svm}
\end{table}

In Table \ref{svm}, iteration numbers (``Iter") and computational time (``CPU") are reported for various settings of total data number $m$. As can be seen easily, the new algorithm \eqref{I-IDL-ALM} needs both the less required iteration numbers and computing time for a smaller regularization factor $\tau$.  In particular,  the in-depth choice of $\tau=0.75$ is experimentally shown to be optimal, and  it saves up to $25\%$ iterations compared with the baseline choice of $\tau=1$. We also see from Table \ref{svm} that the new algorithm \eqref{I-IDL-ALM} with $\tau=0.75$ outperforms the influential PDA experimentally, which further shows the efficiency of the proposed method.   To further visualize the numerical results, in Figure \ref{svmf}, we show the computational classification results of the new algorithm \eqref{I-IDL-ALM} with $\tau=0.75$ and plot the convergence curves of the tested algorithms for $100$, $500$ and $1000$ training date sets, respectively. It can be seen again from Figure \ref{svmf} that the new algorithm \eqref{I-IDL-ALM} has a steeper convergence curve for a smaller factor $\tau$.

\subsection{Potts model-based image segmentation}
During recent years, convex optimization has been developed as an efficient and powerful tool to image segmentation \cite{chambolle2011first,chan2006algorithms,lellmann2009convex,yuan2014spatially} with provable  convergence. In this subsection, we consider the convex-relaxed Potts model for multiphase image segmentation
\begin{equation}\label{Potts-ori}
  \min_{u_i(x)\geq0}\;\Big\{\sum_{i=1}^{m}\int_{\Omega}u_i(x)\rho(l_i,x)+\alpha|\nabla u_i(x)|dx  \;\Big | \;\sum_{i=1}^mu_i(x)=1 \Big\},
\end{equation}
where $\alpha>0$ is the weight parameter for the regularization term of the total perimeter of all segmented regions, and $\rho(l_i,x)$ $(i=1,\ldots,m)$ are used to evaluate the cost of assigning the label $l_i$ to the specified position $x$. To avoid directly tackling the complicated pixel-wise simplex constraints on the labeling functions $u_i(x)$, $i=1,\ldots,m$, and the non-smooth total-variation (TV) term (see \cite{ROF1992}) in \eqref{Potts-ori}, we exploit its equivalent dual optimization model introduced in \cite{yuan2010continuous}, namely the continuous max-flow model:
\begin{equation}\label{Potts}
  \begin{aligned}
 \max_{p_s, p, q}\; &\int_{\Omega}p_s(x)dx\\
\;\; \hbox{s.t. } \;& \D {q_i(x)}-p_s(x)+p_i(x)=0,\; i=1,2,\ldots,m;\\
\qquad\; & |q_i(x)|\leq \alpha,\;p_i(x)\leq\rho(l_i,x),\; i=1,2,\ldots,m,
\end{aligned}
\end{equation}
where $p_s(x)$ and $p_i(x)$ $(i=1,\ldots,m)$ denote the source flow and the sink flow respectively, and the linear equality constraints are the so-called flow balance conditions of the continuous max-flow model. The optimized labeling functions $u_i(x)$ $(i=1,\ldots,m)$ by the convex relaxed Potts model are just the multipliers to the linear equality constraints in \eqref{Potts}.
We follow the variational analysis as in \cite{yuan2010study,yuan2010continuous}, where the linear inequality constraints $p_i(x)\leq\rho(l_i,x)$ $(i=1,\ldots,m)$ of \eqref{Potts} are nothing but associated with $u_i(x) \geq 0$, then we can omit both the flow variables $p_i(x)$ and their related linear inequalities so as to fix non-negative labeling functions $u_i(x) \geq 0$ directly. This largely saves computational complexities and therefore results in a new abbreviated version of the continuous max-flow model \eqref{Potts}
as follows:
\begin{equation}\label{Potts-1}
  \begin{aligned}
 \min_{p_s,q} \; &\int_{\Omega} -p_s(x)dx\\
\;\; \hbox{s.t. }\; &\D {q_i(x)}-p_s(x)\geq-\rho(l_i,x),\; i=1,2,\ldots,m;\\
\qquad\; & |q_i(x)|\leq \alpha,\; i=1,2,\ldots,m \, .
\end{aligned}
\end{equation}
It is easy to observe that the above simplified continuous max-flow model \eqref{Potts-1} can be equally written as \eqref{problem} in that
\begin{equation}\label{Potts-L}
  \begin{aligned}
\min_{p_s, q}\; &\int_{\Omega} -p_s(x)dx \, + \, I_C(q(x))\\
 \;\; \hbox{s.t. }\; &\D {q_i(x)}-p_s(x)\geq-\rho(l_i,x),\; i=1,2,\ldots,m,
\end{aligned}
\end{equation}
where $I_C(q(x))$ is the convex characteristic function of the convex set $C := \{|q_i(x)|\leq \alpha,\; i=1,2,\ldots,m\}$.

After discretization, the linearly inequality-constrained convex minimization model \eqref{Potts-L} can be essentially written as
$$
\min_{p_s, q}\; \underbrace{- {\bf 1}^T p_s \, + \, I_C(q)}_{\theta(p_s, q)}
$$
subject to the following linear inequality constraints
$$
\underbrace{\left(\!\!\!
    \begin{array}{c}
      -I \\
      -I \\[-0.1cm]
      \vdots \\
      -I \\
    \end{array}
 \!\!\! \right)p_s(x)+\left(\!\!\!
               \begin{array}{c}
                 \D \\
                 0 \\[-0.1cm]
                 \vdots \\
                 0 \\
               \end{array}
            \!\!\! \right)q_1(x)+\cdots+\left(\!\!\!
                                            \begin{array}{c}
                                              0 \\
                                              0 \\[-0.1cm]
                                              \vdots \\
                                              \D  \\
                                            \end{array}
                                         \! \!\!\right)q_m(x)}_{A (p_s; q_1;\ldots; q_m)}\,
                                         \geq\,
                                         \underbrace{-\left(
                      \!\!\!                                    \begin{array}{c}
                                                            \rho(l_1,x) \\
                                                            \rho(l_2,x) \\[-0.1cm]
                                                            \vdots \\
                                                            \rho(l_m,x) \\
                                                          \end{array}
                   \!\!\!                                    \right)}_{b} \, .
$$
The associated matrix $A$ of \eqref{Potts-L} is thus specified as
$$\left(\!\!
    \begin{array}{ccccc}
      -I & \D &  0 &    \cdots            &      0        \\
      -I        &     0       &   \D &   \cdots        &              0 \\[-0.1cm]
      \vdots &       \vdots  &   \vdots     &        \ddots            &  \vdots    \\
      -I        &     0       &    0     &     \ldots          &             \D \\
    \end{array}\!\!
  \right), \; \hbox{and} \;AA^T=\left(\!\!
                                         \begin{array}{cccc}
                                           I\!-\!\Delta & I            & \cdots & I \\
                                           I            & I\!-\!\Delta & \cdots & I \\[-0.1cm]
                                           \vdots    & \vdots   & \ddots & \vdots \\
                                           I & I & \cdots & I\!-\!\Delta \\
                                         \end{array}\!\!
                                       \right).
$$
Applied the I-IDL-ALM \eqref{I-IDL-ALM} to \eqref{Potts-L}, the resulting scheme is
\begin{equation}\label{algorithmp}
  \left\{
    \begin{array}{cl}
          \tilde{u_i}^k&= \;\big[u_i^k-\beta(-p_s^k+\rho(l_i) + \D q_i^k)\big]_{+},\;\; i=1,\ldots,m,\\[0.2cm]
           p_s^{k+1} & = \; p_s^k + \frac{1}{\tau r}({\bf 1}-\sum_{i=1}^n\tilde{u_i}^k), \\[0.2cm]
           q_i^{k+1}&= \; \mathcal{P}_{C}(q_i^k+\frac{\D^T\tilde{u_i}^k}{\tau r}), \;\;  i=1,\ldots,m,\\[0.2cm]
          {u_i}^{k+1}&=\; \tilde{u_i}^k+\beta \big\{p_s^{k+1}-p_s^k+ \D  (q_i^k-q_i^{k+1})\big\}, \;\;  i=1,\ldots,m.
    \end{array}
  \right.
\end{equation}

To simulate, we follow the package developed by the authors of \cite{yuan2010continuous} (all the codes and image data are available at \url{https://www.mathworks.com/matlabcentral/fileexchange/34224-fast-continuous-max-flow-algorithm-to-2d-3d-multi-region-image-segmentation}), and the stopping criterion for \eqref{Potts-L} is defined as
$$\hbox{ADE}(k):=\frac{\|u^{k+1}-u^{k}\|}{\hbox{size}(u)}<10^{-7},$$
where ``ADE" is short for the average of dual error.  In addition, as analyzed in \cite{Sun2021}, we have $\rho(A^TA)\leq (8+m)$ for the 2D image and $\rho(A^TA)\leq(12+m)$ for the 3D image in the convex relaxed Potts model \eqref{Potts-L}. To implement the tested algorithms efficiently, by selecting out from a number of various values, we choose the concrete parameter settings as follows.
\begin{itemize}
  \item For the case $m=2$, i.e., the foreground-background image segmentation:
  \begin{itemize}
    \item PDA: $r=3$ and $s=9.1/r$ for  2D image, and $r=10$ and $s=13.1/r$ for  3D image;
    \item I-IDL-ALM \eqref{I-IDL-ALM}: $\beta=0.3$, $r=9.1\beta$ and $\tau=0.75,1.00,1.20,1.50$ for 2D image, and $\beta=1.0$, $r=9.1\beta$ and $\tau=0.75,1.00,1.20,1.50$ for 3D image;
  \end{itemize}
  \item For the case $m=4$, i.e., the four-phases image segmentation:
  \begin{itemize}
    \item PDA: $r=2$ and $s=12.1/r$ for  2D image, and $r=2$ and $s=16.1/r$ for  3D image;
    \item I-IDL-ALM \eqref{I-IDL-ALM}: $\beta=0.4$, $r=12.1\beta$ and $\tau=0.75,1.00,1.20,1.50$ for 2D image, and $\beta=0.2$, $r=16.1\beta$ and $\tau=0.75,1.00,1.20,1.50$ for 3D image.
  \end{itemize}
\end{itemize}

\subsubsection{Numerical experiments on foreground-background image segmentation} \label{section-fore-back}

The foreground-background image segmentation aims to partition the given image into two regions of foreground and background, i.e.,  two labels.

In Table \ref{table11}, iteration numbers (``Iter") and computational time (``CPU") are reported for 2D and 3D foreground-background image segmentation. Again, it can be seen easily that the new algorithm \eqref{I-IDL-ALM} converges faster with a small regularization quadratic terms, and that the in-depth choice $\tau=0.75$ is experimentally shown optimal for both the 2D and 3D foreground-background image segmentation. These preliminary numerical results affirmatively verify our theoretical result of the paper, i.e., the relaxation of the regularization term allows a bigger step size to potentially reduce required convergence iterations. In addition, we can discern from Table \ref{table11}  that the new algorithm \eqref{I-IDL-ALM} with $\tau=0.75$  performs competitively with the benchmark PDA, which can be further used to demonstrate the efficiency of the new algorithm. The computed segmentation regions in 2D and 3D images are visualized in Figure \ref{figure1} and Figure \ref{figure2}, respectively.

\begin{table}[H]
\caption{Numerical results of the foreground-background image segmentation with the TV regularization parameters $\alpha=0.5$ (2D image) and $\alpha=0.2$ (3D image), respectively, solved by the PDA, the I-IDL-ALM \eqref{I-IDL-ALM} with various values of $\tau$.}
\vspace{0.15cm}
\centering
\begin{tabular}{l ccccccc cccccccccc}
\toprule
 \multirow{2}{*}{Image} &      \multicolumn{2}{c}{PDA}   &   \multicolumn{2}{c}{$\tau=0.75$}  &  \multicolumn{2}{c}{$\tau=1.00$}   &  \multicolumn{2}{c}{$\tau=1.20$} &  \multicolumn{2}{c}{$\tau=1.50$}  \cr \cmidrule(lr){2-3} \cmidrule(lr){4-5} \cmidrule(lr){6-7} \cmidrule(lr){8-9} \cmidrule(lr){10-11}
                                           &   Iter      &   CPU  &   Iter        &   CPU   &   Iter      &   CPU  &   Iter        &   CPU  &   Iter        &   CPU \cr
\midrule
    2D image    &   212     &    0.70      &   188     &    0.56     &   206     &    0.66      &    213     &    0.67     &    227     &    0.76          \\
    3D image    &   115     &    71.34    &   101     &    75.52   &   117     &    87.51    &    130     &    97.64   &    148     &    110.67       \\
  \bottomrule
 \end{tabular}
 \label{table11}
\end{table}

\begin{figure}[H]
\centering
\subfigure{
\includegraphics[width=5cm]{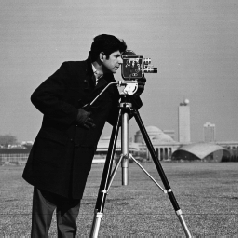}
}
\qquad\quad
\subfigure{
\includegraphics[width=5cm]{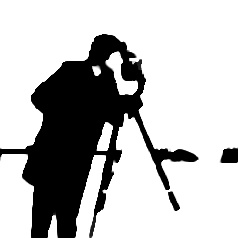}
}
\caption{From left to right: the original image and the computed segmentation result solved by the I-IDL-ALM  \eqref{I-IDL-ALM} with $\tau=0.75$ (image size: $238 \times 238$).}
\label{figure1}
\end{figure}

\begin{figure}[H]
\centering
\subfigure{
\includegraphics[width=4.9cm]{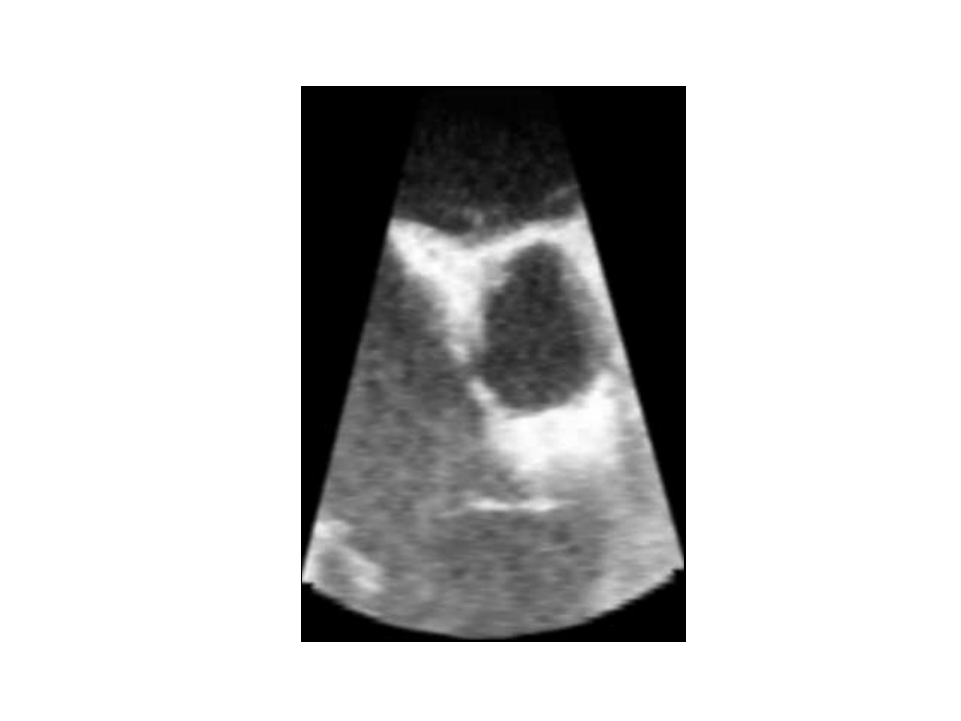}
}\hspace{-15mm}
\subfigure{
\includegraphics[width=4.9cm]{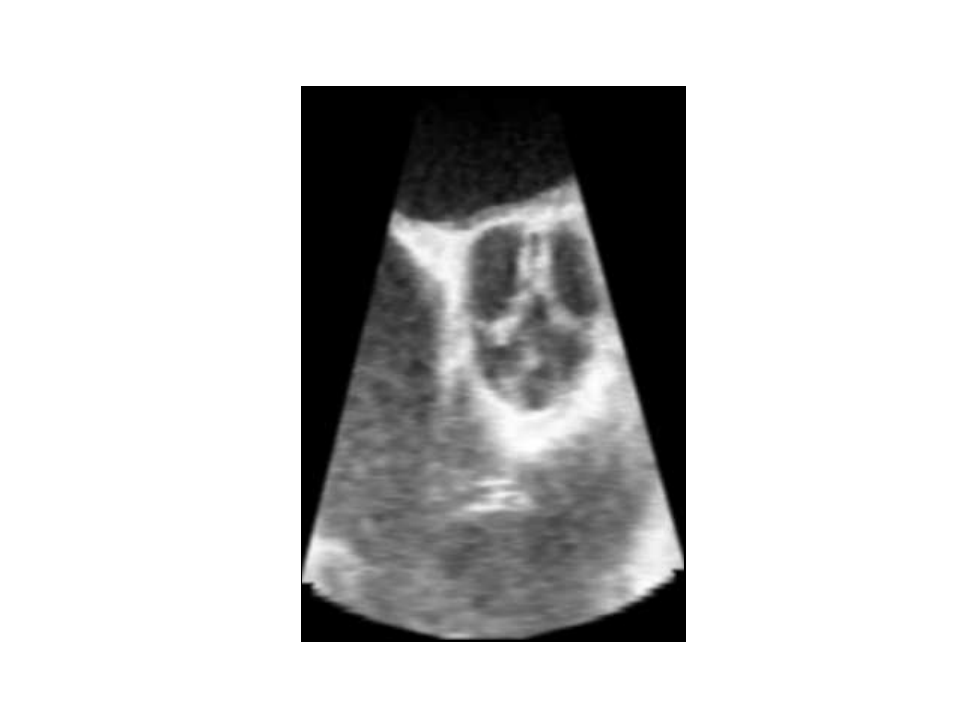}
}\hspace{-15mm}
\subfigure{
\includegraphics[width=4.9cm]{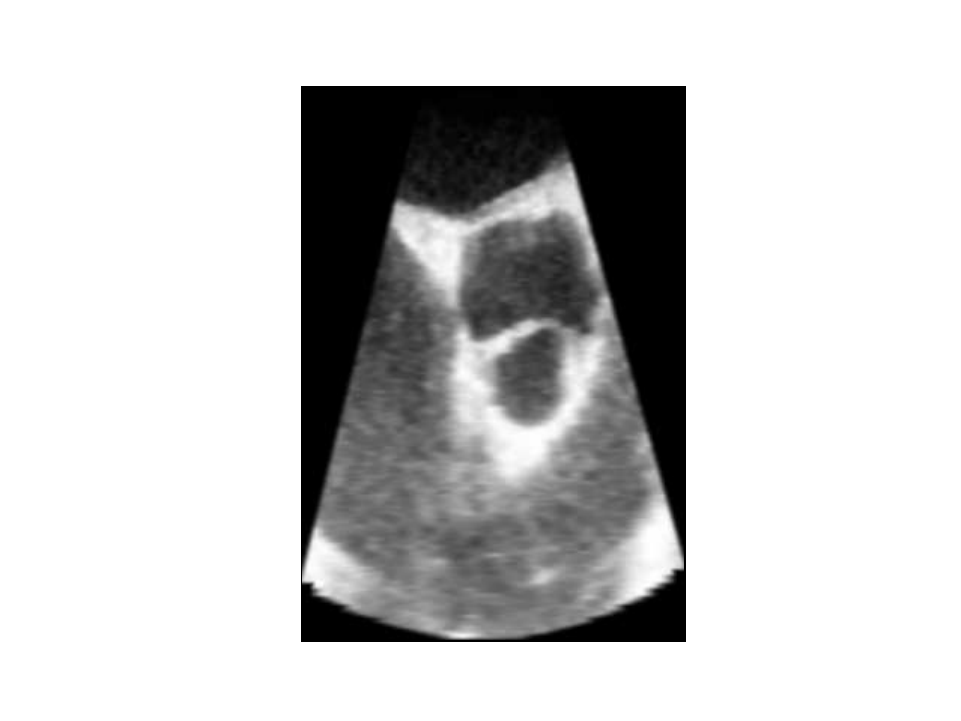}
}\hspace{-15mm}
\subfigure{
\includegraphics[width=4.9cm]{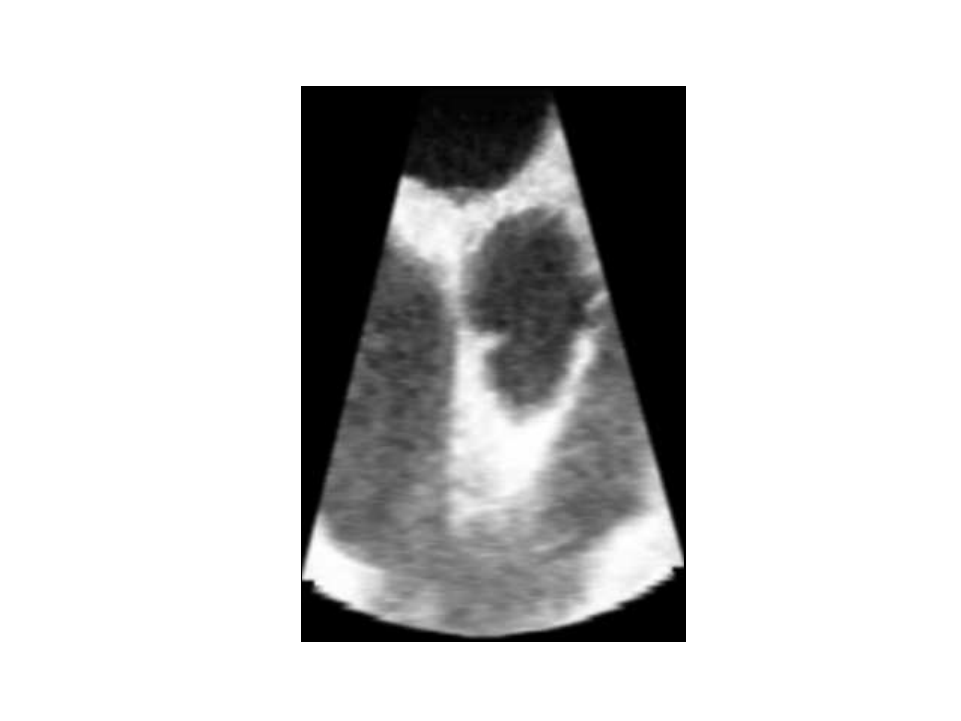}
}\hspace{-15mm}
\\[-0.2cm]
\subfigure{
\includegraphics[width=4.9cm]{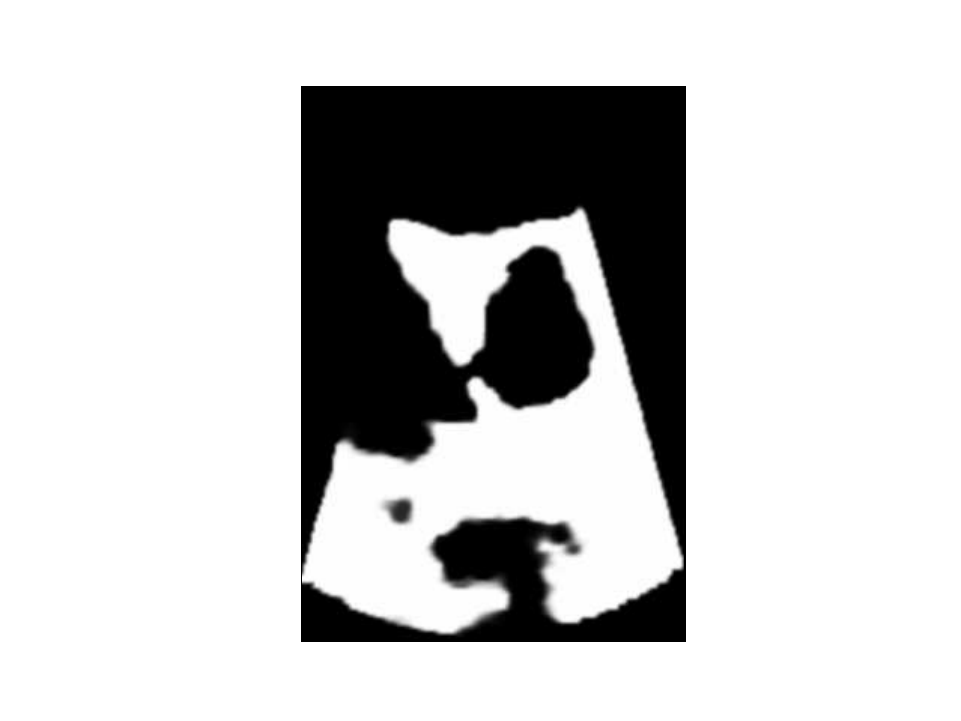}
}\hspace{-15mm}
\subfigure{
\includegraphics[width=4.9cm]{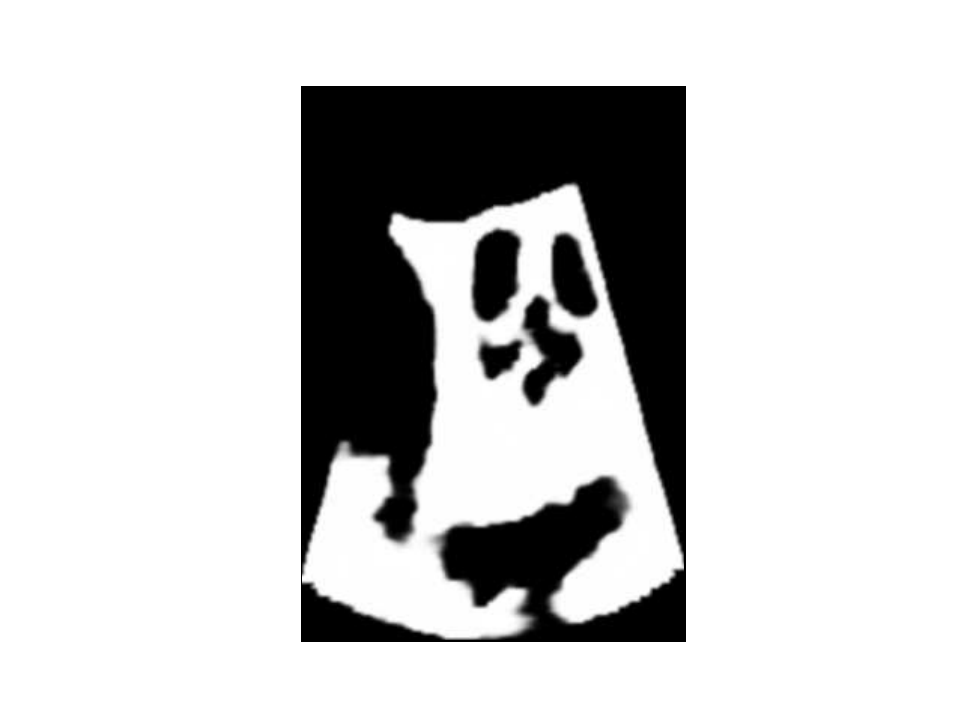}
}\hspace{-15mm}
\subfigure{
\includegraphics[width=4.9cm]{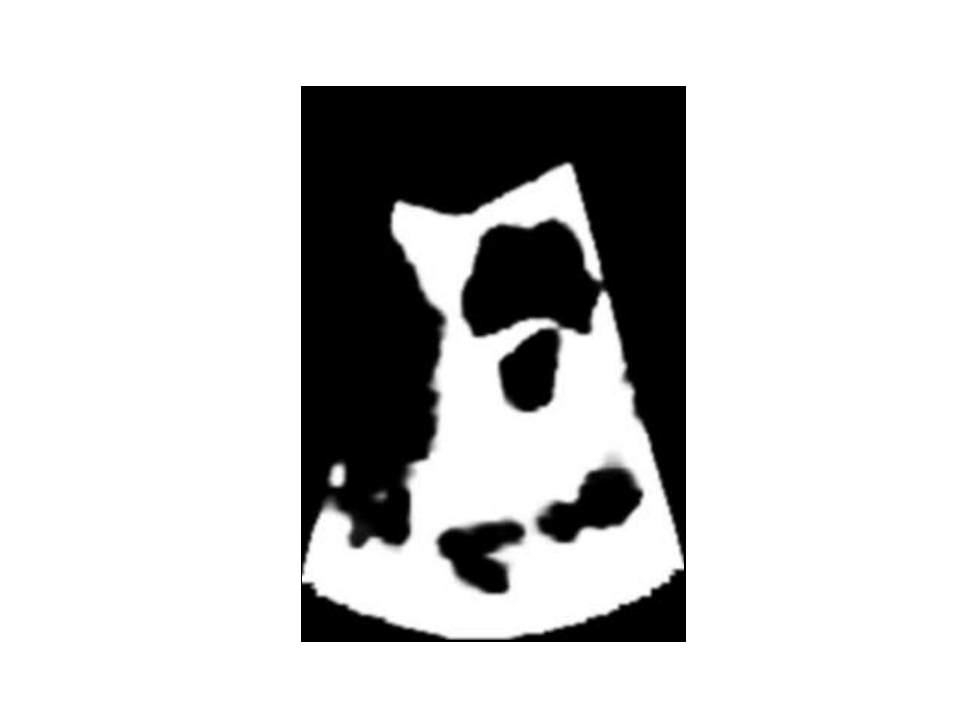}
}\hspace{-15mm}
\subfigure{
\includegraphics[width=4.9cm]{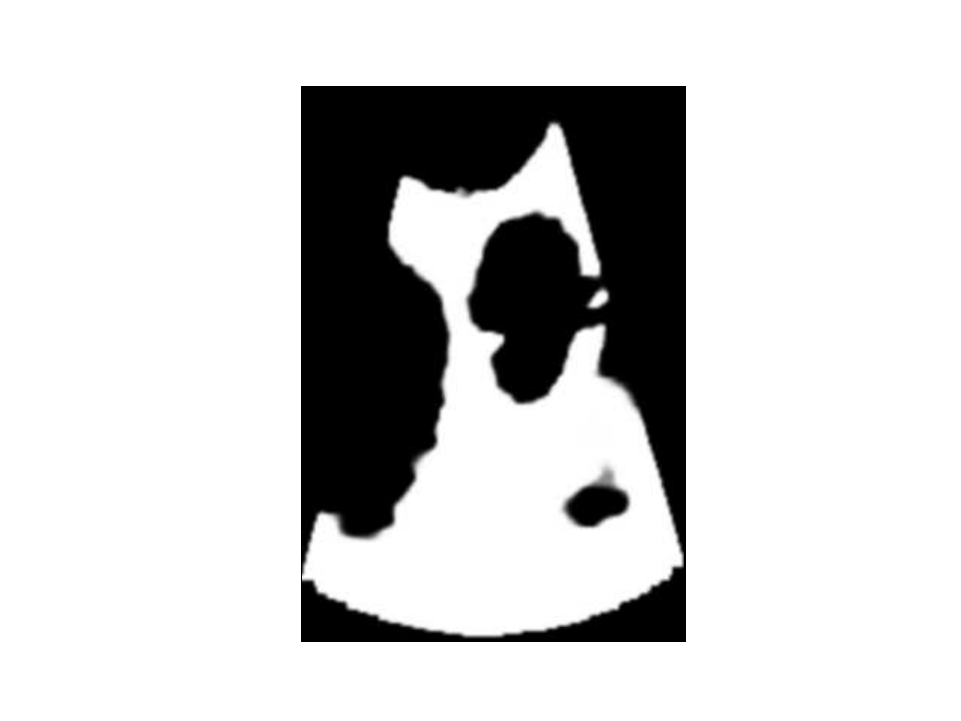}
}\vspace{-0.2cm}
\caption{First row: the original images (the 40-th, 50-th, 60-th, 70-th slice in sagittal view). Second row: the computed segmentation results solved by the I-IDL-ALM  \eqref{I-IDL-ALM} with $\tau=0.75$ (image size: $208\times 112\times 114$).}
\label{figure2}
\end{figure}

\subsubsection{Experiments on multiphase image segmentation}

Numerical experiments on multiphase (4 labels) image segmentation are also conducted in 2D and 3D cases.

Similar to Section \ref{section-fore-back},  in Table \ref{table12}, iteration numbers (``Iter") and computational time (``CPU") are reported for 2D and 3D multiphase (4 labels) image segmentation. For new algorithm \eqref{I-IDL-ALM}, it is easy to see  that a smaller value of $\tau$ results in fewer iterations to render convergence for both 2D and 3D experiments, and the acceleration effect of the I-IDL-ALM \eqref{I-IDL-ALM} with the in-depth choice $\tau=0.75$ is obviously shown again.  It can be seen again from Table \ref{table12}  that the proposed  \eqref{I-IDL-ALM} with $\tau=0.75$ performs competitively with the baseline PDA. Moreover, the computed segmentation regions in 2D and 3D images are visualized in Figure \ref{figure3} and Figure \ref{figure4}, respectively.

\begin{table}[H]
\caption{Numerical results of the multiphase image segmentation (4 labels) with the TV regularization parameter $\alpha=1.0$ (both for 2D and 3D images), solved by the PDA, the I-IDL-ALM \eqref{I-IDL-ALM} with various values of $\tau$.}
\vspace{0.15cm}
\centering
\begin{tabular}{l ccccccc cccccccccc}
\toprule
 \multirow{2}{*}{Image} &      \multicolumn{2}{c}{PDA}   &   \multicolumn{2}{c}{$\tau=0.75$}  &  \multicolumn{2}{c}{$\tau=1.00$}   &  \multicolumn{2}{c}{$\tau=1.20$} &  \multicolumn{2}{c}{$\tau=1.50$}  \cr \cmidrule(lr){2-3} \cmidrule(lr){4-5} \cmidrule(lr){6-7} \cmidrule(lr){8-9} \cmidrule(lr){10-11}
                                           &   Iter      &   CPU  &   Iter        &   CPU   &   Iter      &   CPU  &   Iter        &   CPU  &   Iter        &   CPU \cr
\midrule
    2D image    &   168     &    7.24        &   141      &    5.21       &   166     &    7.90        &   194     &    9.98       &   231       &    11.85          \\
    3D image    &   238     &    315.58    &   219      &    337.90   &   249     &    375.95    &   272     &    417.16   &   306       &    464.51        \\
  \bottomrule
 \end{tabular}
 \label{table12}
\end{table}

 \begin{figure}[H]
\centering
\subfigure{
\includegraphics[width=6.5cm]{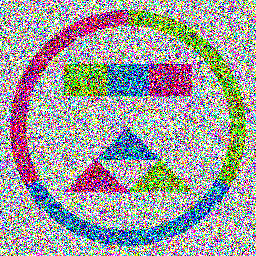}
}
\qquad
\subfigure{
\includegraphics[width=6.5cm]{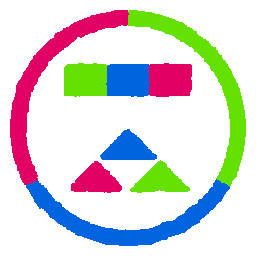}
}
\caption{From left to right: the original image and the computed segmentation result solved by the proposed I-IDL-ALM \eqref{I-IDL-ALM} with $\tau=0.75$ (4 labels, image size: $256\times 256$).}
\label{figure3}
\end{figure}

 In summary,  our observations from these numerical experiments affirmatively demonstrate that the new algorithm \eqref{I-IDL-ALM} can converge faster with a smaller  regularization term. This numerically validates the theoretical results presented in this study.

\begin{figure}[H]
\centering
\subfigure{
\includegraphics[width=4.5cm]{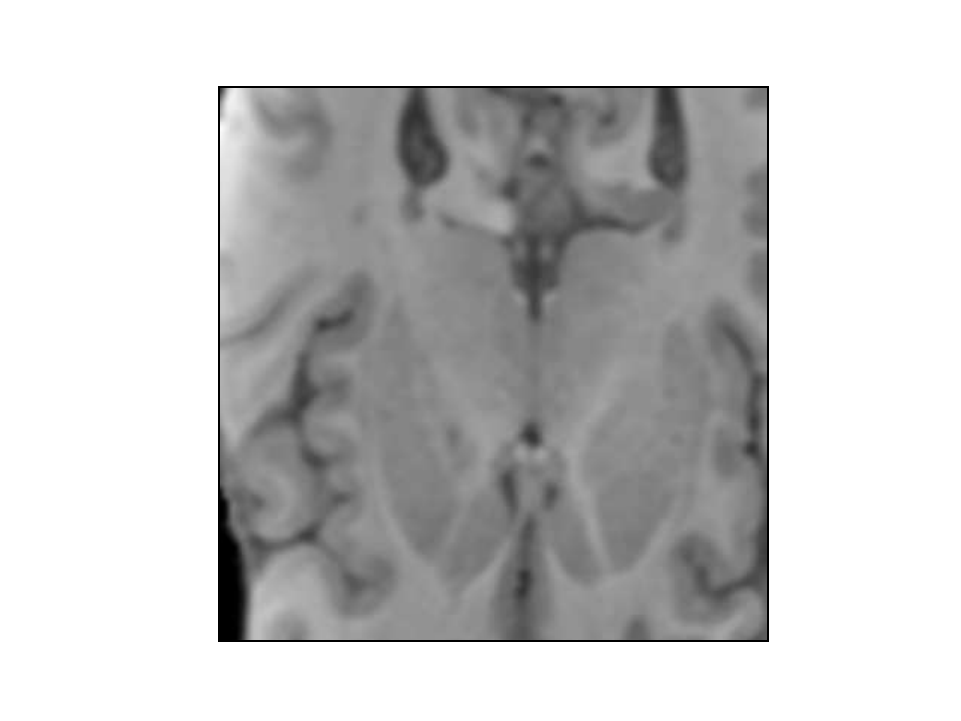}
}\hspace{-10mm}
\subfigure{
\includegraphics[width=4.5cm]{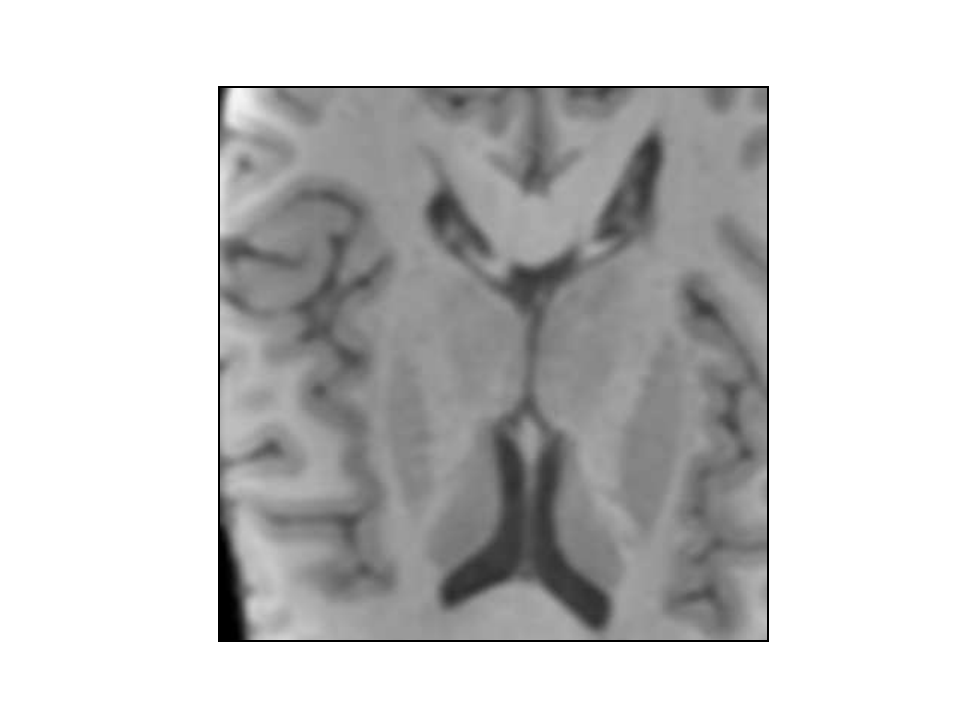}
}\hspace{-10mm}
\subfigure{
\includegraphics[width=4.5cm]{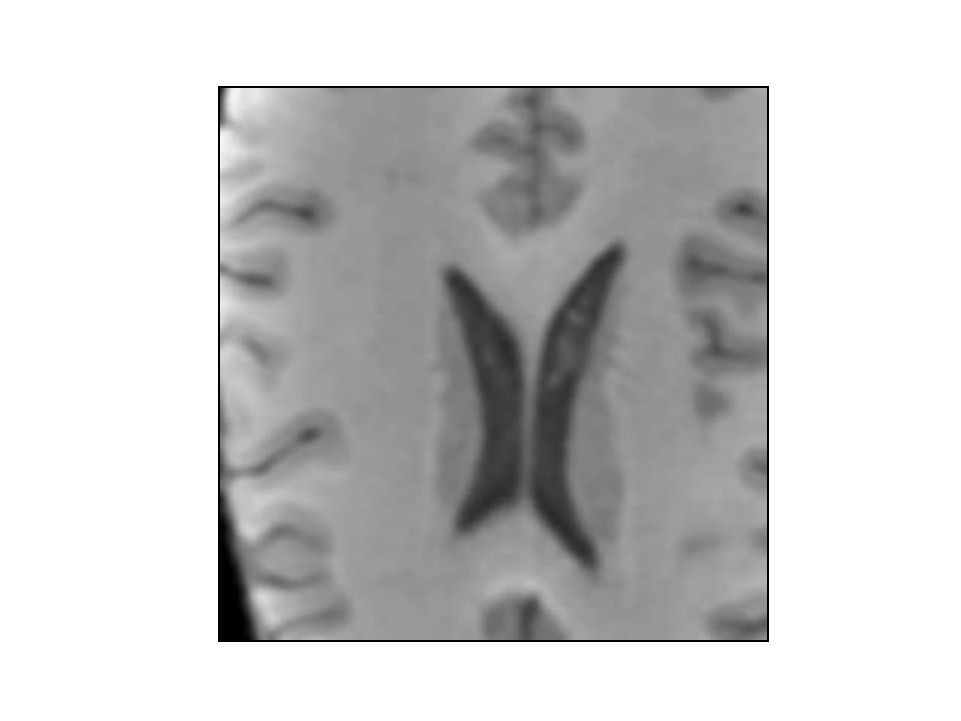}
}\hspace{-10mm}
\subfigure{
\includegraphics[width=4.5cm]{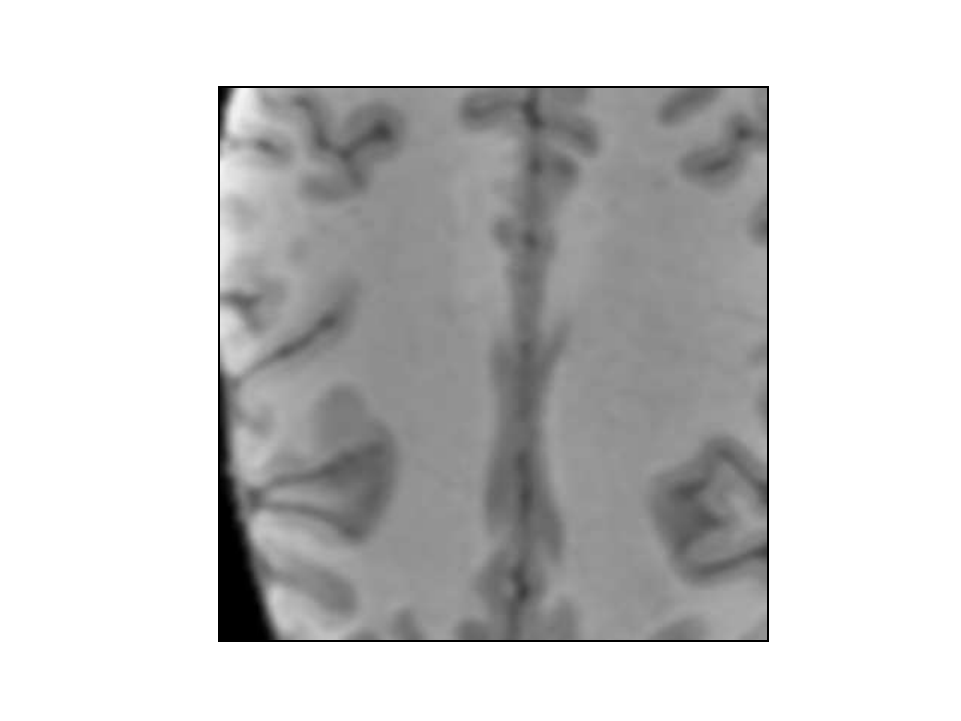}
}
\\[-0.35cm]
\subfigure{
\includegraphics[width=4.5cm]{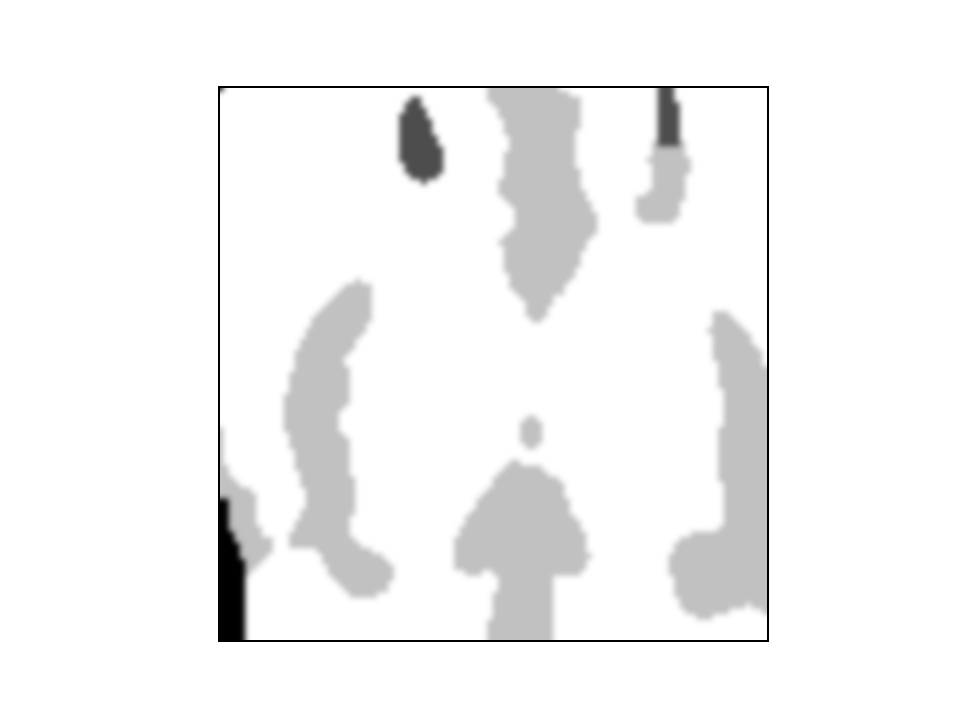}
}\hspace{-10mm}
\subfigure{
\includegraphics[width=4.5cm]{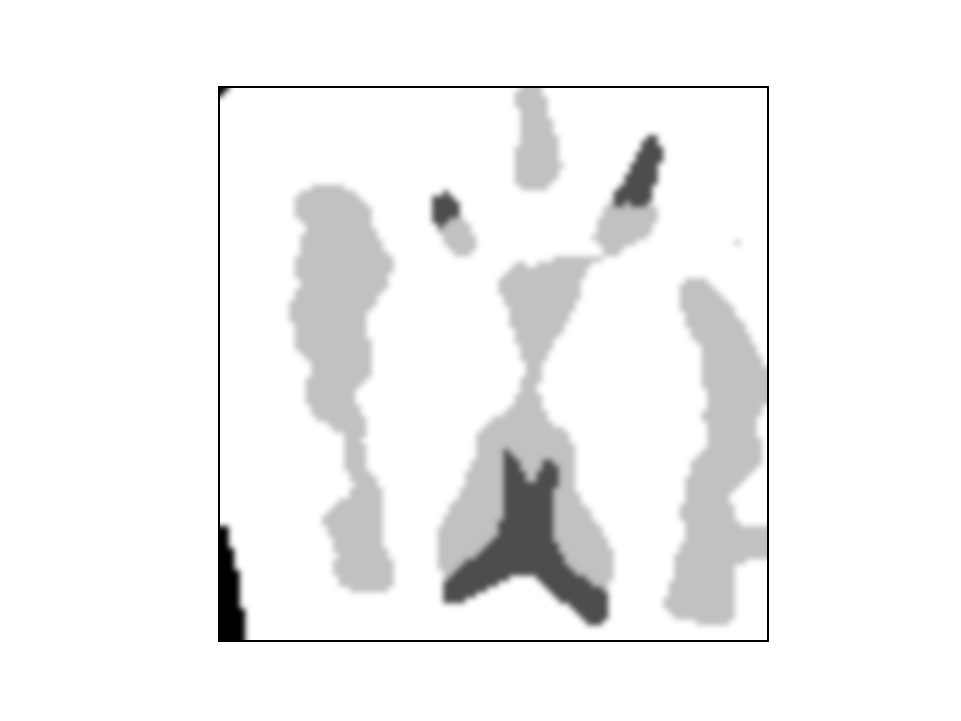}
}\hspace{-10mm}
\subfigure{
\includegraphics[width=4.5cm]{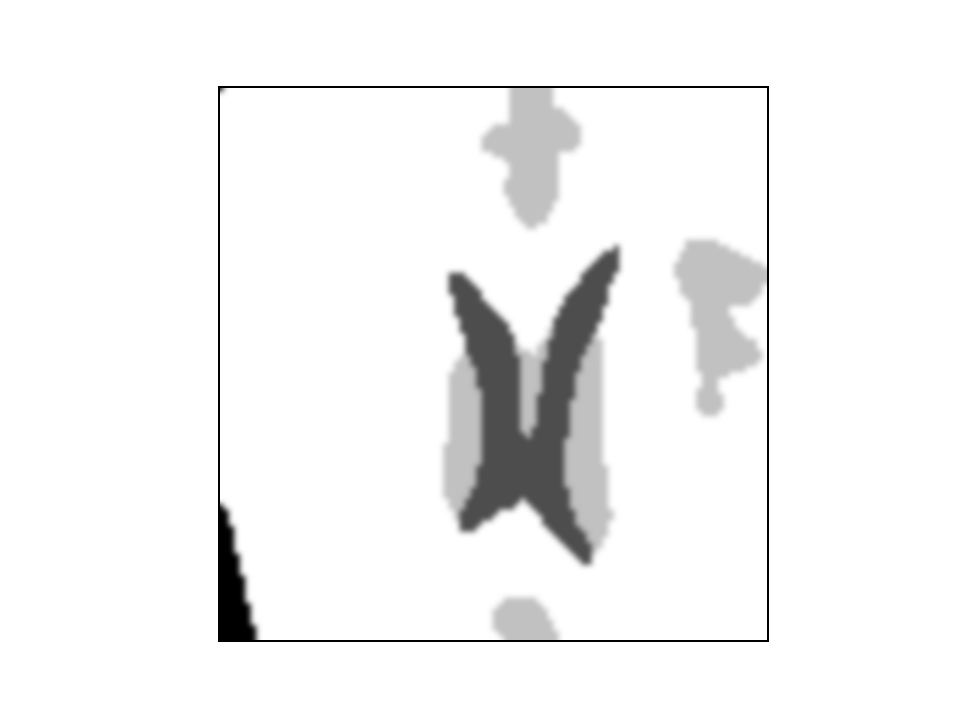}
}\hspace{-10mm}
\subfigure{
\includegraphics[width=4.5cm]{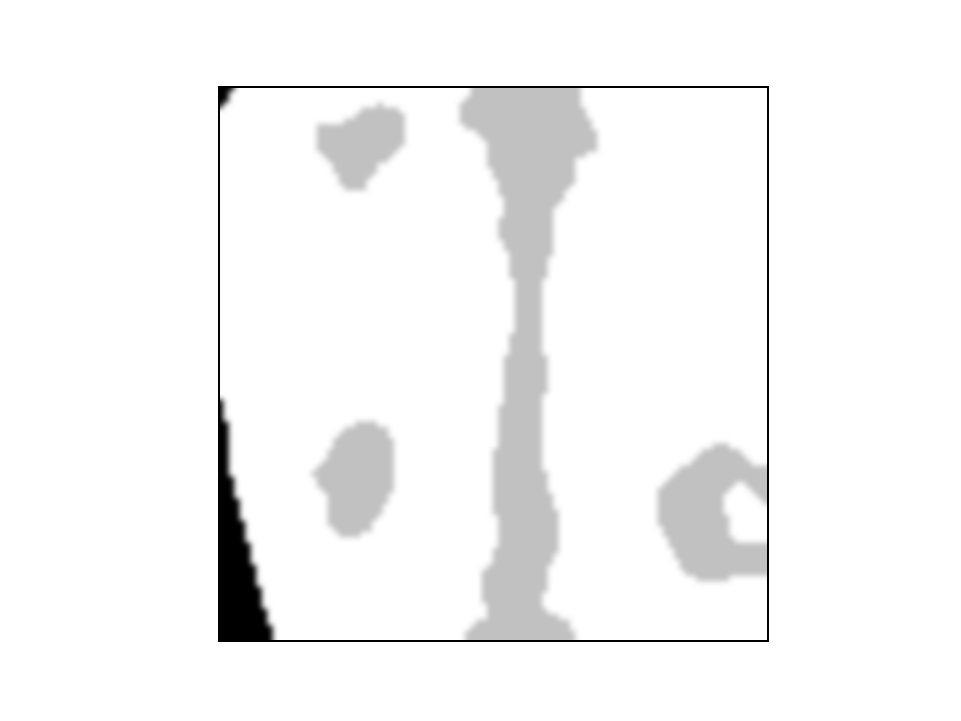}
}
\\[-0.35cm]
\subfigure{
\includegraphics[width=4.5cm]{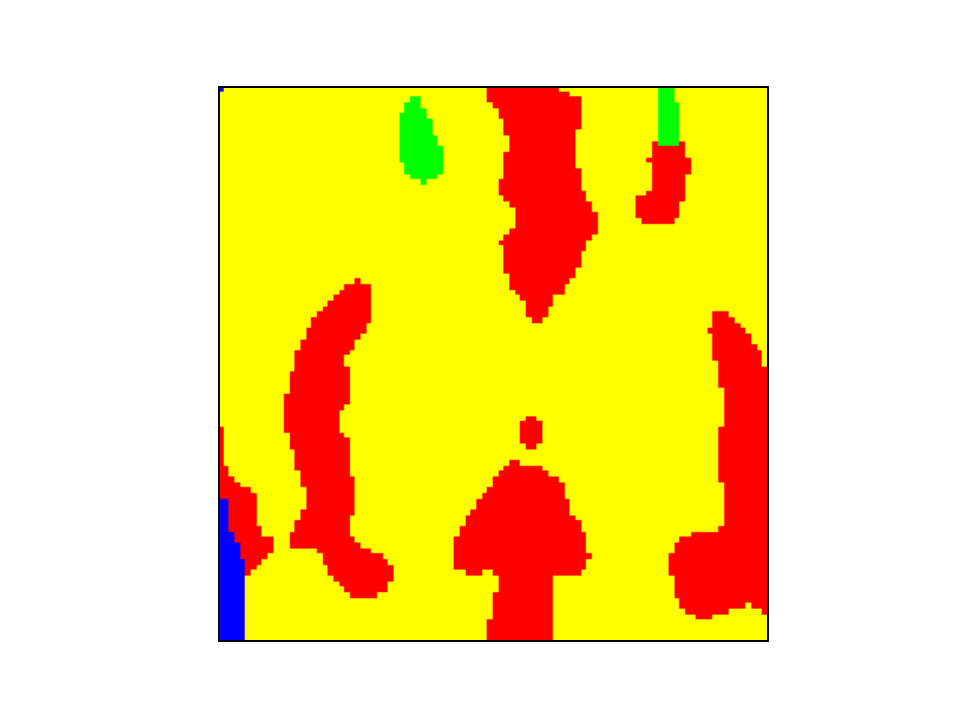}
}\hspace{-10mm}
\subfigure{
\includegraphics[width=4.5cm]{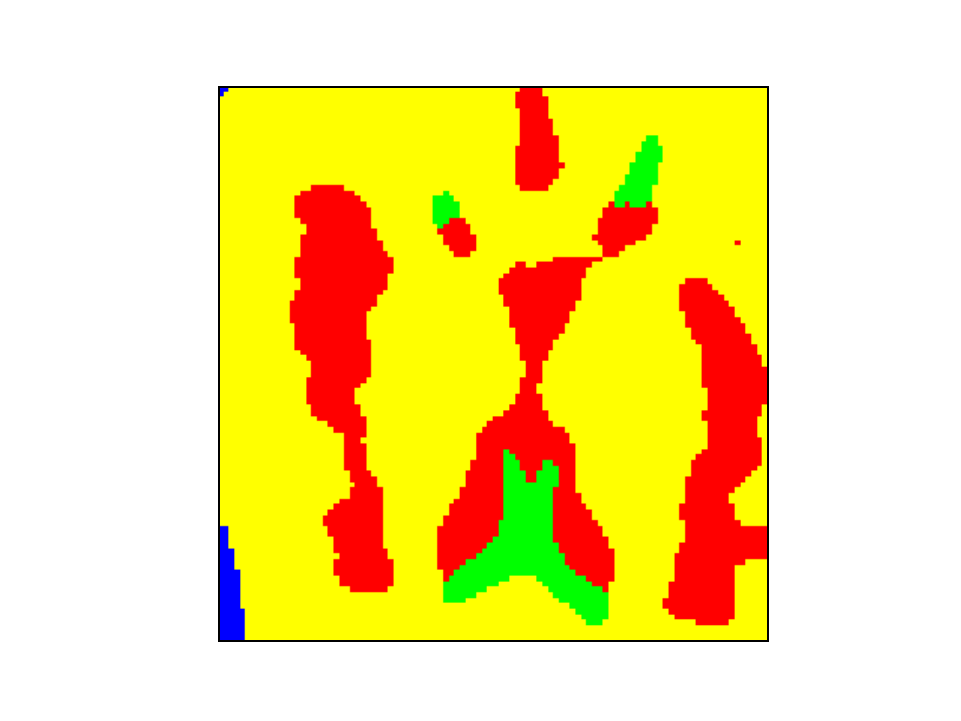}
}\hspace{-10mm}
\subfigure{
\includegraphics[width=4.5cm]{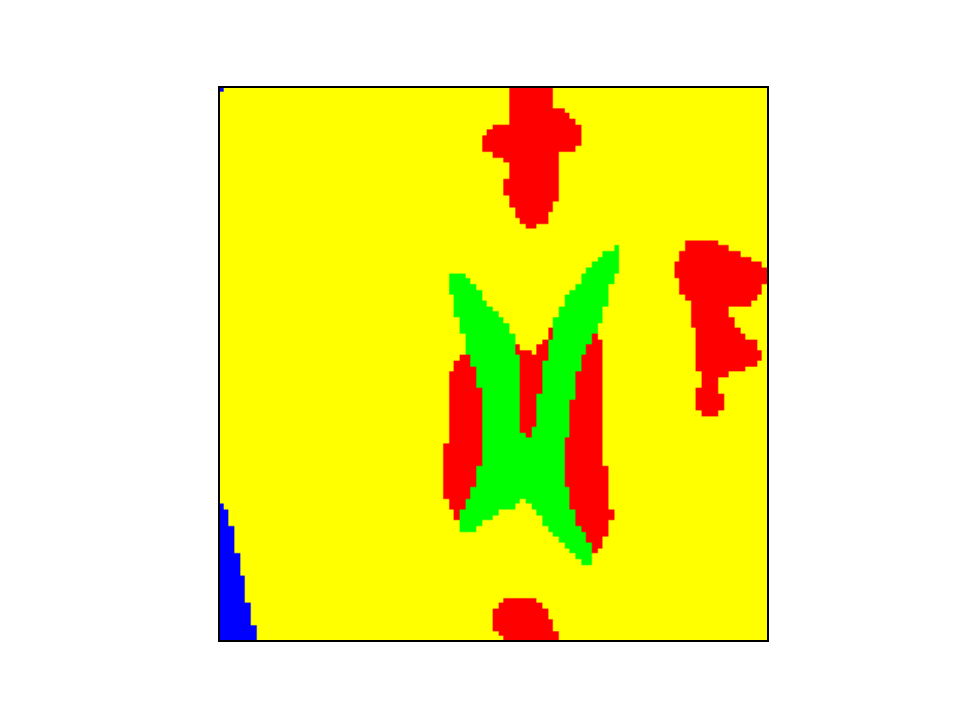}
}\hspace{-10mm}
\subfigure{
\includegraphics[width=4.5cm]{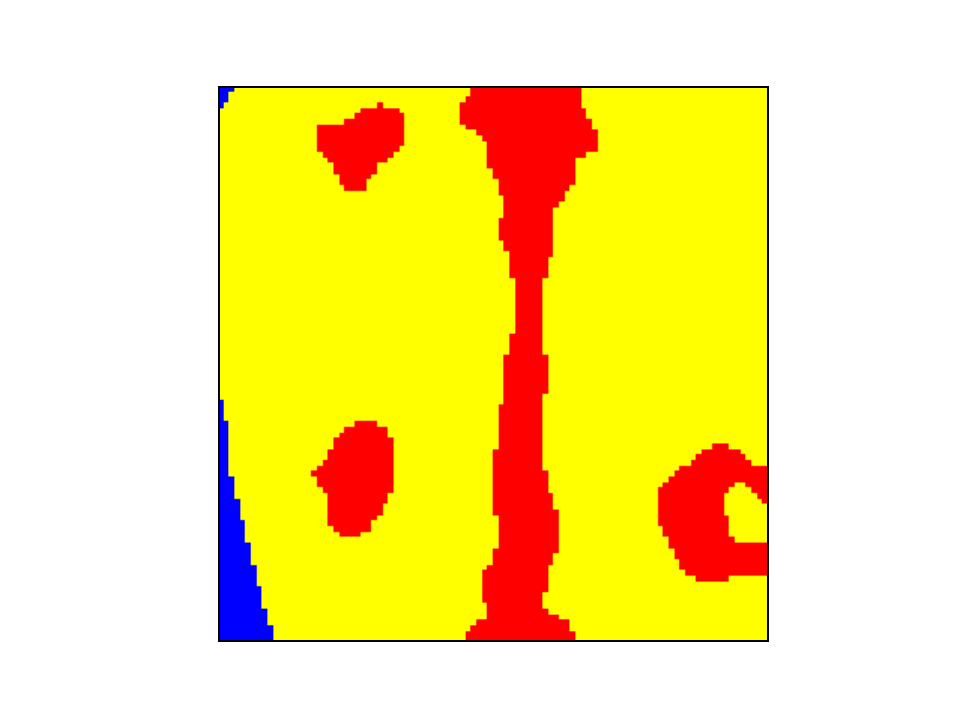}
}
\caption{First row: the original images (70-th, 80-th, 90-th and 100-th slice in transverse view). Second row: the computed segmentation results solved by the  new algorithm \eqref{I-IDL-ALM} with $\tau=0.75$. Third row: the corresponding colorized regions for better visualization (4 labels, image size: $102\times101\times100$).}
\label{figure4}
\end{figure}

%
%

\section{Extension}\label{section6}
\setcounter{equation}{0}

Note that $\rho(A^TA)$ needs to be estimated beforehand when implementing the proposed method \eqref{I-IDL-ALM} for a concrete model in the form of \eqref{problem}. In this section, we also introduce an indefinite ALM for the studied model \eqref{problem} when $\rho(A^TA)$ is difficult to estimate.

We follow the basic indefinite linearized framework \eqref{I-IDL-ALM} and present the new algorithm as follows:
\begin{subequations}\label{I-Sub-ALM}
\begin{numcases}{ \hbox{\qquad\;\;}}
  \label{Sub-ALM-y}  \tilde{\lambda}^{k} =  [\lambda^k - \beta(Ax^k-b)]_{+},\\[0.15cm]
  \label{Sub-ALM-x}   x^{k+1} = \arg\min\big\{ \theta(x)-(\tilde{\lambda}^k)^TAx+(\tau+\delta)\frac{\beta}{2}\|A(x-x^k)\|^2  \;|\;  x\in {\cal X}   \big\}, \\[0.15cm]
  \label{Sub-ALM-y1} \lambda^{k+1}  =  \tilde{\lambda}^k + \beta A(x^k-x^{k+1}),
  \end{numcases}
\end{subequations}
in which $\beta>0$, $\delta>0$ and $\tau\in(0.75,1)$. It is clear that the proposed I-IDL-ALM \eqref{I-IDL-ALM} is indeed the linearized version of \eqref{I-Sub-ALM}. Also, for the prototypical ALM \eqref{ALM}, by ignoring some terms, it can be regrouped as the following scheme:
\begin{subequations}\label{ALM-Inf}
\begin{numcases}{ \hbox{\qquad\;\;}}
  \label{MALM-y}  \tilde{\lambda}^{k} =  \lambda^k - \beta(Ax^k-b),\\[0.15cm]
  \label{MALM-x}   x^{k+1} = \arg\min\big\{ \theta(x)-(\tilde{\lambda}^k)^TAx+\frac{\beta}{2}\|A(x-x^k)\|^2  \;|\;  x\in {\cal X}   \big\}, \\[0.15cm]
  \label{MALM-y1} \lambda^{k+1}  =  \tilde{\lambda}^k + \beta A(x^k-x^{k+1}).
  \end{numcases}
\end{subequations}
Clearly, the new algorithm \eqref{I-Sub-ALM} has the same computational complexity as the equality ALM \eqref{ALM} (or \eqref{ALM-Inf});  it particularly covers the prototypical ALM \eqref{ALM}  for the case  $\delta+\tau=1$ when the linearly equality-constrained problem \eqref{problem1} is considered; as well as it allows a smaller regularization factor $\delta+\tau\in(0.75,1)$.

%

To show convergence of the new algorithm \eqref{I-Sub-ALM} for any given $\beta>0$, $\delta>0$ and $\tau\in(0.75,1)$, we first reformulate \eqref{I-Sub-ALM} into a prediction-correction method. Similar to Section \ref{subsec2.3},  by setting $\tilde{x}^k=x^{k+1}$, the predictor of \eqref{I-Sub-ALM} then reads as
 \begin{equation}\label{I-Sub-ALM-P}
 \left\{ \begin{array}{rcl}
 \tilde{\lambda}^{k}& = & [\lambda^k - \beta(Ax^k-b)]_{+},\\[0.25cm]
  \tilde{x}^{k} &=& \arg\min\bigl\{ \theta(x)-(\tilde{\lambda}^k)^TAx+(\tau+\delta)\frac{\beta}{2}\|A(x-x^k)\|^2  \; \big| \;  x\in {\cal X}   \bigr\},
  \end{array} \right.
 \end{equation}
and its corresponding VI-structure satisfies
\begin{center}
 \fbox{\begin{minipage}{16.5cm}
 \medskip
(Prediction step)
$$\tilde{w}^k\in\Omega,\;\;\theta(x)-\theta(\tilde{x}^k)+(w-\tilde{w}^k)^TF(\tilde{w}^k)\geq(w-\tilde{w}^k)^TQ(w^k-\tilde{w}^k),\;\;\forall\;w\in\Omega,$$
where
\begin{equation}\label{Q1}
  Q=\left(\!\!
    \begin{array}{cc}
       (\tau+\delta)\beta A^TA & 0 \\
      -A & \frac{1}{\beta}I_m \\
    \end{array}\!\!
  \right).
\end{equation}
\smallskip
\end{minipage}
}
\end{center}
\medskip
Furthermore, the corrector of \eqref{I-Sub-ALM} can be recursively rewritten as
\medskip
\begin{center}
 \fbox{\begin{minipage}{16.5cm}
 \medskip
(Correction step)

$$w^{k+1}=w^k-M(w^k-\tilde{w}^k),$$
where
\begin{equation}\label{M1}
  M=\left(\!\!
      \begin{array}{cc}
        I_n & 0 \\
        -\beta A & I_m \\
      \end{array}\!\!
    \right).
\end{equation}
\smallskip
\end{minipage}}
\end{center}
\medskip

Under the assumption that $A$ is  full column-rank, $\beta>0$, $\delta>0$ and $\tau\in(0.75,1)$,  we can easily verify that,
$$H=QM^{-1}=\left(\!\!
      \begin{array}{cc}
       (\tau+\delta) \beta A^TA & 0 \\
         0 & \frac{1}{\beta}I_m \\
      \end{array}\!\!
    \right)\succ0,
$$
and
$$ G=Q^T+Q-M^THM=\left(\!\!
      \begin{array}{cc}
         \underbrace{\delta \beta A^TA-(1-\tau)\beta A^TA}_{D_0} & 0 \\[0.2cm]
         0 & \frac{1}{\beta}I_m \\
      \end{array}\!\!
    \right).$$
Setting the new matrices
$$D:=\delta \beta A^TA/\tau \quad \hbox{and}\quad  D_0:=[\delta-(1-\tau)]\beta A^TA,$$
the analysis in Section \ref{sec:4.1} and Section \ref{section4.2} can be repeated seamlessly. Therefore, the global convergence and a worst-case $\mathcal{O}(1/N)$ convergence rate measured
by the iteration complexity can be established for the proposed method \eqref{I-Sub-ALM}.
A formal description in theory will be further exposed in a future work soon.

\section{Conclusions}\label{section7}
We present an indefinite linearized ALM for the canonical convex minimization problem with linear inequality constraints, overcoming the calculation difficulty of the core subproblem in the original inequality ALM. To the best of our knowledge, this is the first work to introduce the structure-probed ALM-based scheme for efficiently tackling the canonical convex minimization problem with linear inequality constraints. Under the new algorithmic framework,  the recent indefinite linearized ALM for the linearly equality-constrained convex optimization problem can be regarded as its special case.  The numerical tests on some application problems, including the support vector machine for classification and continuous max-flow models for image segmentation, demonstrate that the proposed method can converge faster with a smaller regularization term.  This study can enrich and extend our knowledge for the inequality version of ALM.

\end{CJK*}
\end{document}